\newtheorem{R}{Remark}
\newtheorem{T}{Theorem}
\newtheorem{C}{Corollary}
\newtheorem{Le}{Lemma}
\newcommand{\ds}{\displaystyle}
\newcommand{\B}{\mathbb{B}}
\newcommand{\A}{\mathbb{A}}
\newcommand{\re}{\mathbb{R}}
\newcommand{\und}{\underline}
\title{On the Lagrange and Markov
Dynamical Spectra.}
\author{Sergio Augusto Roma\~na Ibarra \footnote{S. A. Roma\~na is partially supported by CNPq, Capes and the Palis Balzan Prize.}\\ sergiori@im.ufrj.br\\  Carlos Gustavo T. de A. Moreira\footnote{\text{C. G. Moreira is partially supported by CNPq and the Palis Balzan Prize.}}\\ gugu@impa.br}
\date{}
\begin{document}
\maketitle

\begin{abstract}
\noindent We consider the Lagrange and the Markov dynamical spectra associated to horseshoes on surface with Hausdorff dimensions greater than $1$. We show that for a ``large" set of real functions on the surfaces and for ``typical" horseshoes with  Hausdorff dimensions greater than $1$, both the Lagrange and the Markov dynamical spectrum have persistently non-empty interior.
\end{abstract}
 \section{Introduction}
Regular Cantor sets on the line play a fundamental role in dynamical systems and notably also in some problems in number theory. They are defined by expansive maps and have some kind of self similarity property: Small parts of them are diffeomorphic to big parts with uniformly bounded distortion (see precise definition in section \ref{sec RCSEMAH}).
Some background on the regular Cantor sets with are relevant to our work can be found in \cite{CF}, \cite{PT}, \cite{MY} and \cite{MY1}.\\

\noindent A mathematical object intimately related to our work (cf. \cite{CF}), is the classical Lagrange spectrum, which is defined as  follows: Given an irrational number $\alpha$, according to Dirichlet's theorem the inequality $\left|\alpha-\frac{p}{q}\right|<\frac{1}{q^2}$ has infinite rational solutions $\frac{p}{q}$. 
Markov and Hurwitz improved this result (cf. \cite{CF}), proving that, for all irrational $\alpha$, the inequality $\left|\alpha-\frac{p}{q}\right|<\frac{1}{\sqrt{5}q^2}$ has infinitely many rational solutions $\frac{p}{q}$.\\

\noindent Meanwhile, for a fixed  irrational $\alpha$ better results can be expected. This leads us to associate, to each $\alpha$, its best constant of approximation (Lagrange value of $\alpha$), given by \begin{eqnarray*}k(\alpha)&=&\sup\left\{k>0:\left|\alpha-\frac{p}{q}\right|<\frac{1}{kq^2} \ \text{has infinitely many rational solutions $\frac{p}{q}$ }\right\}\\
&=&\limsup_{ \stackrel{|p|,q\to\infty}{p\in \mathbb{Z},q\in \mathbb N}}\left|q(q\alpha-p)\right|^{-1}\in \re\cup\{+\infty\}.
\end{eqnarray*}
\noindent Then, we always have $k(\alpha)\geq \sqrt{5}$.
Consider the set $$L=\{k(\alpha):\alpha\in \re\setminus \mathbb{Q} \ \text{and} \ k(\alpha)<\infty\}$$
known as \textit{{Lagrange spectrum}} (for properties of $L$  cf. \cite{CF}).\\

\noindent In 1947 Marshall Hall (cf.\cite{Hall}) proved that the regular Cantor set $C(4)$ of the real numbers in $[0,1]$ in whose continued fraction only appear coefficients $1, 2, 3, 4$, then 
$$C(4)+C(4)=[\sqrt{2}-1,4(\sqrt{2}-1)].$$

\noindent
Let $\alpha$ be an  irrational number expressed in continued fractions by $\alpha=[a_0,a_1,\dots]$. Define, for each $n\in \mathbb{N}$, $\alpha_{n}=[a_n,a_{n+1},\dots]$ and $\beta_{n}=[0,a_{n-1},a_{n-2},\dots]$. Using elementary continued fractions techniques it can be proved that 
$$k(\alpha)=\limsup_{n\to \infty}(\alpha_n+\beta_{n}).$$

\noindent
With this latter characterization of the Lagrange spectrum and from Hall's result it follows that $L\supset[6,+\infty)$, the Lagrange spectrum contains a whole half-line, called  \textit{Hall's ray} 
.\\

\noindent
In 1975, G. Freiman (cf. \cite{F} and \cite{CF}) proved some difficult results showing that the arithmetic sum of certain (regular) Cantor sets, related to continued fractions, contain intervals, and used them to determined the precise beginning of Hall's ray (The biggest half-line contained in $L$) which is 
$$
\frac{2221564096 + 283748\sqrt{462}}{491993569} \cong 4,52782956616\dots\,.
$$

\noindent 	Another interesting set is the classical \textit{Markov spectrum} defined by (cf. \cite{CF})
\begin{equation*}\label{SMClassic}
M=\left\{\inf_{(x,y)\in \mathbb{Z}^{2}\setminus(0,0)}|f(x,y)|^{-1}:f(x,y)=ax^2+bxy+cy^2 \ \text{with} \ b^2-4ac=1\right\}.\end{equation*}

\noindent Both the Lagrange and Markov spectrum have a dynamical interpretation.  This fact is an important motivation for our work.\\

\noindent
Let $\Sigma=({\mathbb{N}^*})^{\mathbb{Z}}$ and $\sigma\colon \Sigma \to \Sigma$ the shift defined by $\sigma((a_n)_{n\in\mathbb{Z}})=(a_{n+1})_{n\in\mathbb{Z}}$. If $f\colon \Sigma \to \re$ is defined by $f((a_n)_{n\in\mathbb{Z}})=\alpha_{0}+\beta_{0}=[a_0,a_1,\dots]+[0,a_{-1},a_{-2},\dots]$, then 

$$L=\left\{\limsup_{n\to\infty}f(\sigma^{n}(\underline{\theta})):\underline{\theta}\in \Sigma\right\}$$
and  
$$M=\left\{\sup_{n\in\mathbb{Z}}f(\sigma^{n}(\underline{\theta})):\underline{\theta}\in \Sigma\right\}.$$
\noindent This last interpretation, in terms of shift, admits a natural generalization of Lagrange and Markov spectrum in the context of hyperbolic dynamics (at least in dimension 2, which is the focus of this work).\\

\noindent
We will define the Lagrange and Markov dynamical spectrum as follows. Let $\varphi \colon M^2\to M^2$ be a diffeomorphism with $\Lambda\subset M^2$ a hyperbolic set for $\varphi$. Let $f\colon M^2\to \re$ be 
a continuous real function, then the \textit{Lagrange Dynamical Spectrum} associated to $(f,\Lambda)$ is defined by 
$$\ds L(f,\Lambda)=\left\{\limsup_{n\to\infty}f(\varphi^{n}(x)):x\in \Lambda\right\},$$

\noindent
and the \textit{Markov Dynamical Spectrum} associate to $(f,\Lambda)$ is defined by  
$$M(f,\Lambda)=\left\{ \sup_{n\in \mathbb{Z}}f(\varphi^{n}(x)):x\in \Lambda\right\}.$$

\noindent The problem of finding intervals in the classical Lagrange and Markov spectra is closely related to the study of the fractal geometry of regular Cantor sets related to the Gauss map. Fractal geometry of Cantor sets is also the key to solve some problems about dynamical Lagrange and Markov spectra in dimension two. In fact,
using results on stable intersections of two regular Cantor sets whose sum of Hausdorff dimensions is greater than 1 (cf. \cite{MY} and \cite{MY1}), we 
prove the following theorem:\\ 
\ \\
\noindent {\bf Main Theorem} \emph{Let $\Lambda$ be a horseshoe associated to a $C^2$-diffeomorphism $\varphi$ such that  $HD(\Lambda)>1$. Then there is, arbitrarily close to $\varphi$, a diffeomorphism $\varphi_{0}$ and a $C^{2}$-neighborhood $W$ of $\varphi_{0}$ such that, if $\Lambda_{\psi}$ denotes the continuation of $\Lambda$ associated to $\psi\in W$, there is an open and dense set $H_{\psi}\subset C^{1}(M,\re)$ such that for all $f\in H_{\psi}$, we have 
\begin{equation*}
int \ L(f,\Lambda_{\psi})\neq\emptyset \ \text{and} \ int  \ M(f,\Lambda_{\psi})\neq\emptyset,
\end{equation*}
where $int A$ denotes the interior of $A$.}\\

\noindent $\bf{Remark}:$ In the previous statement, by horseshoe
we mean a compact, locally maximal, transitive hyperbolic invariant set of saddle type (and so it contains a dense subset of periodic orbits).

\section{Preliminaries}

\subsection {Preliminaries from dynamical systems}\label{Prel}

\noindent If $\Lambda$ is a hyperbolic set associated to a $C^{2}$-diffeomorphism, then the stable and unstable foliations $\mathcal{F}^{s}(\Lambda)$ and $\mathcal{F}^{u}(\Lambda)$ are $C^{1}$. Moreover, these foliations can be extended to $C^{1}$ foliations defined on a full neighborhood of $\Lambda$ (cf. \cite[pp 604]{K}).\\
Let $\Lambda$ be a horseshoe of $\varphi$ and consider a finite collection $(R_{a})_{a\in\mathbb{A}}$ of disjoint rectangles of $M$, which form a Markov partition of $\Lambda$ (cf. \cite[pp 129]{Shub}). The set $\mathbb{B}\subset \mathbb{A}^{2}$ of admissible transitions consist of pairs $(a_{0},a_{1})$ such that $\varphi(R_{a_{0}})\cap R_{a_{1}}\neq \emptyset$. So, we can define the following transition matrix $B$ which induces the same transitions than $\B\subset \A^{2}$
$$b_{a_ia_j}=1 \ \ \text{if} \ \  \varphi(R_{a_i})\cap R_{a_j}\neq \emptyset,\ \ \ b_{a_ia_j}=0 \ \ \ \text{otherwise, for $(a_i,a_j)\in \A^{2}$.}$$

\noindent Let $\Sigma_{\mathbb{A}}=\left\{\und{a}=(a_{n})_{n\in \mathbb{Z}}:a_{n}\in \mathbb{A} \ \text{for all} \ n\in \mathbb{Z}\right\}$. We can define the homeomorphism of $\Sigma_{\mathbb{A}}$, the shift, $\sigma:\Sigma_{\mathbb{A}}\to\Sigma_{\mathbb{A}}$ defined by $\sigma(({a}_{n})_{n\in \mathbb{Z}})=({a}_{n+1})_{n\in \mathbb{Z}}$. \\
Let $\Sigma_{B}=\left\{\und{a}\in \Sigma_{\mathbb{A}}:b_{a_{n}a_{n+1}}=1\right\}$, this set is a closed and $\sigma$-invariant subspace of $\Sigma_{\mathbb{A}}$. Still denote by $\sigma$ the restriction of $\sigma$ to $\Sigma_{B}$. The pair $(\Sigma_{B},\sigma)$ is called a subshift of finite type of $(\Sigma_\A, \sigma)$.
\noindent Given $x,y\in \A$, we denote by $N_{n}(x,y,B)$ the number of admissible strings for $B$ of length $n+1$, beginning at $x$ and ending with $y$. Then the following holds
$$N_{n}(x,y,B)=b^{n}_{xy}.$$  
In particular, since $\varphi|_{\Lambda}$ is transitive, there is  $N_{0}\in \mathbb{N}^*$ such that for all $x,y\in \A$, $N_{N_{0}}(x,y,B)>0$.

\noindent Subshifts of finite type also have a sort of local product structure. First we define the local stable and unstable sets: (cf. \cite[chap 10]{Shub})
\begin{eqnarray*}
W_{1/3}^{s}(\und{a})&=&\left\{\und{b}\in \Sigma_{B}: \forall n\geq 0, \ d(\sigma^{n}(\und{a}),\sigma^{n}(\und{b}))\leq 1/3\right\}\\
&=&\left\{\und{b}\in \Sigma_{B}: \forall n\geq 0, \ a_n=b_n\right\},\\
W_{1/3}^{u}(\und{a})&=&\left\{\und{b}\in \Sigma_{B}: \forall n\leq 0, \ d(\sigma^{n}(\und{a}),\sigma^{n}(\und{b}))\leq 1/3\right\}\\
&=&\left\{\und{b}\in \Sigma_{B}: \forall n\leq 0, \ a_n=b_n\right\},
\end{eqnarray*}
where $d(\und{a},\und{b})=\sum_{n=-\infty}^{\infty}2^{-(2\left|n\right|+1)}\delta_{n}(\und{a},\und{b})$ and $\delta_{n}(\und{a},\und{b})$ is $0$ when $a_n=b_n$ and $1$ otherwise.\\
So, if $\und{a},\und{b}\in \Sigma_{B}$ and $d(\und{a},\und{b})<1/2$, then $a_0=b_0$ and $W_{1/3}^{u}(\und{a})\cap W_{1/3}^{u}(\und{b})$ is a unique point, denoted by the bracket $[\und{a},\und{b}]=(\cdots,b_{-n},\cdots,b_{-1},b_{0},a_{1},\cdots,a_{n},\cdots)$.  

\noindent  If $\varphi$ is a diffeomorphism of a surface ($2$-manifold), then the dynamics of $\varphi$ on $\Lambda$ is topologically conjugate to a subshift $\Sigma_{B}$ defined by $B$, namely, there is a homeomorphism $\Pi\colon \Sigma_{B} \to \Lambda$ such that, the following diagram commutes
$$\xymatrix{\Sigma_{B} \ar[r]^\sigma \ar[d]_\Pi & \Sigma_{B} \ar[d]^{\Pi \ \ \ \ \ \ds \emph{i.e.}, \ \ \ \varphi\circ\Pi=\Pi\circ \sigma.}\\
\Lambda \ar[r]^\varphi & \Lambda }$$
Moreover, $\Pi$ is a morphism of the local product structure, that is, $\Pi[\und{a},\und{b}]=[\Pi(\und{a}),\Pi(\und{b})]$, (cf. \cite[chap 10]{Shub}).


\section{The Lagrange and Markov Dynamical Spectra}

Let $\varphi:M\rightarrow M$ be a diffeomorphism of a compact 2-manifold $M$ and let $\Lambda$ be a horseshoe for $\varphi$.

\ \\
\noindent{\bf{Remark:}} We have  $L(f,\Lambda)\subset M(f,\Lambda)$ for any $f\in C^{0}(M,\mathbb{R})$. 
In fact:\\
Let $a\in L(f,\Lambda)$, then there is $x_0\in \Lambda$ such that $\ds a=\limsup_{n \to +\infty}f(\varphi^{n}(x_0))$. Since $\Lambda$ is a compact set, then there is a subsequence $(\varphi^{n_{k}}(x_0))$ of $(\varphi^{n}(x_0))$ such that $\ds\lim_{k\to +\infty}\varphi^{n_k}(x_0)=y_{0}$ and 
$$a=\limsup_{n \to +\infty}f(\varphi^{n}(x_0))=\lim_{k\to +\infty}f(\varphi^{n_k}(x_0))=f(y_0).$$

\noindent {\it{Claim:}} $f(y_{0})\geq f(\varphi^{n}(y_0))$ for all $n\in \mathbb{Z}$. Otherwise, suppose there is $n_{0}\in \mathbb{Z}$ such that $f(y_{0})<f(\varphi^{n_0}(y_{0}))$. Put $\epsilon=f(\varphi^{n_0}(y_{0}))-f(y_0)$, then, since $f$ is a continuous function, there is a neighborhood $U$ of $y_0$ such that 
$$f(y_{0})+\frac{\epsilon}{2}<f(\varphi^{n_0}(z)) \ \text{for all} \ z\in U.$$
Thus, since $\varphi^{n_k}(x_0)\to y_0$, then there is $k_{0}\in \mathbb{N}$ such that $\varphi^{n_k}(x_0)\in U$ for $k\geq k_0$, therefore, 
$$f(y_{0})+\frac{\epsilon}{2}<f(\varphi^{n_0+n_k}(x_0)) \ \text{for all} \ k\geq k_0.$$
This contradicts the definition of $a=f(y_0)$.\\
\ \\

\noindent In the next sections will be given some tools to prove the Main Theorem.
\subsection{The ``Large" Subset of $C^{1}(M,\mathbb{R})$.}\label{sec1}
In this section we construct a ``large" set of functions in $C^{1}(M,\re)$ which will be useful in the proof of Main Theorem. 
\begin {T}\label{TH1}The set $$H_{\varphi}=\left\{f\in C^{1}(M,\mathbb{R}):\#M_{f}(\Lambda)=1 \ \ and, \ for \ \ z\in M_{f}(\Lambda), \ Df_{z}(e_{z}^{s,u})\neq 0\right\}$$ is open and dense,
where $M_{f}(\Lambda)=\{z\in\Lambda:f(z)\geq f(y) \ \ \forall \ y\in \Lambda\}$ and $e_{z}^{s,u}$ are unit vectors in $E_{z}^{s,u}$ of the definition of hyperbolicity, respectively. 
\end{T}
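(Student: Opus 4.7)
Let $f \in H_\varphi$, with $M_f(\Lambda)=\{z_0\}$ and $Df_{z_0}(E^s_{z_0}), Df_{z_0}(E^u_{z_0})\neq 0$. In local coordinates $(x,y)$ near $z_0$ aligned with the stable and unstable directions, the local product structure identifies $\Lambda$ near $z_0$ with a Cantor product $K^s\times K^u$ containing the origin. The condition that $z_0$ is a maximum of $f|_\Lambda$ combined with $Df_{z_0}(E^{s,u})\neq 0$ forces, via a first-order Taylor expansion along $K^s\times\{0\}$ and $\{0\}\times K^u$, both Cantor sets to be one-sided at $0$; WLOG $K^s,K^u\subset(-\infty,0]$ and $Df_{z_0}=\alpha\,dx+\beta\,dy$ with $\alpha,\beta>0$. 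For $g$ sufficiently $C^1$-close to $f$ we still have $Dg_{z_0}=\alpha'\,dx+\beta'\,dy$ with $\alpha',\beta'>0$, yielding $g(z)-g(z_0)\le \alpha' x+\beta' y+o(\|(x,y)\|)\le -c\|(x,y)\|+o(\|(x,y)\|)$ for $z=(x,y)\in K^s\times K^u$ near $0$, so $z_0$ is a strict local max of $g|_\Lambda$. A global $C^0$-estimate coming from the strict maximum of $f$ on the compact set $\Lambda$ then extends this to $M_g(\Lambda)=\{z_0\}$, and $Dg_{z_0}(E^{s,u})\neq 0$ by continuity. Hence $g\in H_\varphi$.

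\textbf{Density.} Given $f\in C^1(M,\mathbb{R})$ and $\epsilon>0$, we construct $g\in H_\varphi$ within $\epsilon$ of $f$ by three successive perturbations. Call $z_1\in\Lambda$ a \emph{corner} if $W^s_{loc}(z_1)\cap\Lambda$ and $W^u_{loc}(z_1)\cap\Lambda$ are both locally one-sided at $z_1$; the four possible orientation classes of corners are each dense in $\Lambda$ (using iterates of the vertices of the Markov-partition rectangles). Fix $z_0\in M_f(\Lambda)$ and add a small localized affine term $\delta_0\ell_0$, yielding $\tilde f:=f+\delta_0\ell_0$ with $D\tilde f_{z_0}(E^{s,u}_{z_0})\neq 0$ (take $\ell_0$ with nonvanishing derivative on $E^{s,u}$ at $z_0$ and $\delta_0$ small and generic). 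By continuity the signs of $D\tilde f_z(E^{s,u}_z)$ are locally constant on $\Lambda$ near $z_0$; select a corner $z_1\in\Lambda$ close to $z_0$ whose orientation matches those signs, so that $z_1$'s forbidden quadrant in its local product structure coincides with the direction along which $D\tilde f_{z_1}$ is positive. Finally, take a $C^1$ bump $\rho$ with $\rho(z_1)=1$, $D\rho(z_1)=0$, $\rho<1$ elsewhere, sharply supported near $z_1$, and set $g=\tilde f+\delta\rho$ with $\delta$ slightly exceeding $\max_\Lambda\tilde f-\tilde f(z_1)$ (of order $d(z_0,z_1)+|\delta_0|$, hence small). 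Outside a neighborhood of $z_1$, $g(z)=\tilde f(z)\le\max_\Lambda\tilde f<\tilde f(z_1)+\delta=g(z_1)$; near $z_1$, the matching-orientation Taylor bound (as in the openness argument, noting $Dg_{z_1}=D\tilde f_{z_1}$) forces $g(z)<g(z_1)$ on $\Lambda\setminus\{z_1\}$. Thus $M_g(\Lambda)=\{z_1\}$ and $Dg_{z_1}(E^{s,u})\neq 0$, so $g\in H_\varphi$, with parameters tunable so $\|g-f\|_{C^1}<\epsilon$.

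\textbf{Main obstacle.} The critical subtlety is the corner-selection step: finding a corner $z_1$ in $\Lambda$, close to a maximum of $\tilde f$, whose orientation is compatible with the signs of $D\tilde f_{z_1}$ along $E^{s,u}$. This exploits both the density of each orientation class of corners — a geometric fact about Markov partitions of horseshoes under iteration — and the continuity of $D\tilde f$ and of the splitting $E^s\oplus E^u$. Once $z_1$ is secured, coordinating the three parameters $\delta_0$, $d(z_0,z_1)$, and $\delta$ to keep the total $C^1$-perturbation below $\epsilon$ is a routine bookkeeping exercise.
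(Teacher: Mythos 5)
Your openness argument is essentially the paper's: the observation that a maximum point of $f|_\Lambda$ with $Df_{z_0}(E^{s,u}_{z_0})\neq 0$ forces $\Lambda$ to lie locally in a single quadrant of the product structure is Lemma~\ref{L2S}, and combining the resulting strict linear decay of $f$ on $\Lambda$ near $z_0$ with a global $C^0$ bound is Lemma~\ref{L5S}. (One small repair: the remainder $o(\|(x,y)\|)$ in your Taylor bound is not uniform over a $C^1$-neighborhood of $f$; write $g=f+h$ with $\|h\|_{C^1}<\epsilon$ and estimate $|h(z)-h(z_0)|\le\epsilon\,d(z,z_0)$ to make it uniform.) The architecture of your density argument also matches the paper's (fix the derivative at a maximum, relocate to a point of $\partial_s\Lambda\cap\partial_u\Lambda$, bump it up to become the unique maximum), and your orientation-matching requirement on the corner $z_1$ is a genuine refinement which the paper's choice of an arbitrary nearby boundary point leaves implicit.

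The gap is precisely in the step you call routine bookkeeping. The bump $\delta\rho$ must be supported where the matching-orientation Taylor bound holds, i.e.\ in a ball of radius $r$ no larger than the scale $r_1(z_1)$ at which $\Lambda$ is one-sided at $z_1$, and it must have height $\delta>\Delta:=\max_\Lambda\tilde f-\tilde f(z_1)$; any such function has $C^1$-norm at least of order $\Delta/r$. The problematic case is when $Df_{z_0}\neq 0$ but exactly one of $Df_{z_0}(E^{s}_{z_0}),Df_{z_0}(E^{u}_{z_0})$ vanishes (the only case in which relocation is genuinely needed): then for an arbitrary nearby matching corner one only has $\Delta\geq f(z_0)-f(z_1)-O(\delta_0)$ with $f(z_0)-f(z_1)$ of order $\|Df_{z_0}\|\,d(z_0,z_1)$, while $r_1(z_1)$ is at most comparable to $d(z_0,z_1)$ for the corners the product structure supplies. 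Hence $\Delta/r\gtrsim\|Df_{z_0}\|$, a quantity independent of your three parameters: the final perturbation is $C^0$-small but not $C^1$-small. To close the argument you must choose $z_1$ with $f(z_0)-f(z_1)=o(d(z_0,z_1))$, which forces $z_1-z_0$ to lie asymptotically in $\ker Df_{z_0}$; concretely, take $z_1$ on $W^{s}_{\epsilon}(z_0)\cap\Lambda$ (resp.\ $W^{u}_{\epsilon}(z_0)\cap\Lambda$) when $Df_{z_0}(E^{s}_{z_0})=0$ (resp.\ $Df_{z_0}(E^{u}_{z_0})=0$), at a gap endpoint whose adjacent gap has length comparable to $d(z_0,z_1)$ (such points exist by the bounded geometry of this regular Cantor set, and they are automatically one-sided in the complementary direction with the correct orientation because $z_0$ is), and localize $\ell_0$ at scale $d(z_0,z_1)$ so that it shifts values only by $O(\delta_0\,d(z_0,z_1))$. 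Then $\Delta=o(d(z_0,z_1))+O(\delta_0\,d(z_0,z_1))$ and $\Delta/r=o(1)+O(\delta_0)$, and the estimate closes.
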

\noindent Before proving this theorem we will present some auxiliary results.

\ \\
We say that $x$ is a boundary point of $\Lambda$ in the unstable direction, if $x$ is a boundary point of $W^{u}_{\epsilon}(x)\cap \Lambda$, \emph{i.e.}, if $x$ is an accumulation point only from one side by points in $W^{u}_{\epsilon}(x)\cap \Lambda$. If $x$ is a boundary point of $\Lambda$ in the unstable direction, then, due to the local product structure, the same holds for all points in $W^{s}(x)\cap \Lambda$. So the boundary points in the $unstable$ direction are local intersections of local $stable$ manifolds with $\Lambda$. For this reason we denote the set of boundary points in the unstable direction by $\partial_{s}\Lambda$. The boundary points in the stable direction are defined similarly. The set of these boundary points is denoted by $\partial_{u}\Lambda$.\\
\ \\ 
The following theorem is due to S. Newhouse and J. Palis (cf. \cite[pp 170]{PT}).\\

\noindent {\bf Theorem [PN]}\label{TPT}
\textit{For a horseshoe $\Lambda$ as above there is a finite number of (periodic) saddle points $p^{s}_{1},...,p^{s}_{n_{s}}$ such that
$$\Lambda\cap \left(\bigcup_{i}W^{s}(p^{s}_{i})\right)=\partial_{s}\Lambda.$$
Similarly, there is a finite number of (periodic) saddle points $p^{u}_{1},...,p^{u}_{n_{u}}$ such that 
$$\Lambda\cap \left(\bigcup_{i}W^{u}(p^{u}_{i})\right)=\partial_{u}\Lambda.$$
Moreover, both $\partial_{s}\Lambda$ and $\partial_{u}\Lambda$ are dense in $\Lambda$.}




\begin{Le}\label{L1S}
The set
$$\ds \mathscr{A^\prime}=\left\{f\in C^{2}(M,\mathbb{R}):\text{there is}\ \ z\in M_{f}(\Lambda) \ \text{with} \ Df_{z}(e_{z}^{s,u})\neq 0\right\}$$
is dense in $C^{2}(M,\mathbb{R})$, where  $e_{z}^{s,u}$ are unit vectors in $E_{z}^{s,u}$ respectively.
\end{Le}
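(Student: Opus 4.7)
Given $f\in C^{2}(M,\mathbb{R})$ and $\epsilon>0$, I aim to construct $g=f+h\in\mathscr{A}'$ with $\|h\|_{C^{2}}<\epsilon$. The strategy is to pick a point $z_0\in\partial_{s}\Lambda\cap\partial_{u}\Lambda$ very close to an existing maximizer $z^{*}\in M_{f}(\Lambda)$ and to add a small bump at $z_0$ with a linear tilt, so that $z_0$ becomes a maximizer of $g|_{\Lambda}$ at which $Dg_{z_0}$ has nonzero components on both $E^{s}_{z_0}$ and $E^{u}_{z_0}$. Such a $z_0$ exists because Corollary~\ref{C0S} says $\partial_{s}\Lambda\cap\partial_{u}\Lambda$ is dense in $\Lambda$.

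Since $\mathcal{F}^{s},\mathcal{F}^{u}$ extend to $C^{1}$ foliations near $\Lambda$, I can choose $C^{1}$ coordinates $(x,y)$ centered at $z_0$ that simultaneously straighten the local leaves through $z_0$, with $E^{u}_{z_0}=\mathrm{span}(\partial_x)$ and $E^{s}_{z_0}=\mathrm{span}(\partial_y)$. The conditions $z_0\in\partial_{s}\Lambda$ and $z_0\in\partial_{u}\Lambda$, combined with the local product structure, force $\Lambda\cap U\subset Q=\{x\geq 0,\,y\geq 0\}$ in some small neighborhood $U$ of $z_0$ (after a possible orientation flip of each axis). Define
\[
h(x,y)=\chi_{r}(x,y)\,\bigl[K-\alpha\,x-\beta\,y\bigr],
\]
where $\chi_r$ is a standard bump equal to $1$ on $B(z_0,r/2)$ and $0$ outside $B(z_0,r)$, and $K,\alpha,\beta>0$ are parameters to be tuned. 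Then $h(z_0)=K$, $Dh_{z_0}=-\alpha\,dx-\beta\,dy$, and $Dg_{z_0}=(a-\alpha)\,dx+(b-\beta)\,dy$ with $a:=\partial_{x}f(z_0)$, $b:=\partial_{y}f(z_0)$.

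The derivative condition $Dg_{z_0}(E^{u})\neq 0$, $Dg_{z_0}(E^{s})\neq 0$ reads $\alpha\neq a$, $\beta\neq b$, which holds generically in $(\alpha,\beta)$-space. To verify $z_0\in M_{g}(\Lambda)$ I split $\Lambda$ into three pieces: on $\Lambda\cap B(z_0,r/2)\subset Q$ the first-order expansion $g(z)-g(z_0)=(a-\alpha)x+(b-\beta)y+O(|z-z_0|^{2})$ is strictly negative off $z_0$ once $\alpha>a$ and $\beta>b$; on the annulus $r/2\leq|z-z_0|\leq r$ a direct estimate using $\chi_r\leq 1$ and $(x,y)\in Q$ gives $g(z)<g(z_0)$; on $\Lambda\setminus B(z_0,r)$ one has $g=f$, so the choice $K\geq\delta:=f(z^{*})-f(z_0)$ ensures $g(z_0)\geq g(z)$. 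A routine computation yields
\[
\|h\|_{C^{2}}\lesssim \frac{K}{r^{2}}+\frac{\alpha+\beta}{r}+(\alpha+\beta),
\]
which is $<\epsilon$ provided $K,\alpha,\beta$ are sufficiently small and $r$ lies in a suitable window.

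The principal obstacle is reconciling the local-max constraint $\alpha>a$, $\beta>b$ with the $C^{2}$-smallness requirement $\alpha,\beta=O(\epsilon)$. This is handled by a case analysis on the position of $z^{*}$ relative to $\partial_{s}\Lambda\cup\partial_{u}\Lambda$: in every case, the max property of $z^{*}$ and the local quadrant structure at $z^{*}$ force the directional derivatives of $f$ at $z^{*}$ into its local quadrant to be nonpositive (with the exceptional case $Df_{z^{*}}=0$ avoidable by a preliminary $C^{2}$-small perturbation, since critical points of $f$ generically miss the Cantor set $\Lambda$). Continuity of $Df$ and Corollary~\ref{C0S} then allow $z_0$ to be chosen close enough to $z^{*}$ that $a,b$ are as close to $0$ (or negative) as desired, so $\alpha,\beta$ may be taken small while still exceeding $a,b$; the quantity $\delta$ can likewise be made arbitrarily small, leaving ample room to fix $K$ and $r$ so that every inequality above holds simultaneously.
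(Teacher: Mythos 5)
Your overall strategy is the same as the paper's: use Corollary~\ref{C0S} to locate a point $z_0\in\partial_{s}\Lambda\cap\partial_{u}\Lambda$ near a maximizer, exploit the quadrant structure $\Lambda\cap U\subset Q$ at such a point, and add a localized bump that relocates the maximum of $f|_{\Lambda}$ to $z_0$ while tilting the derivative there. The paper organizes this differently (it first passes to Morse functions and removes critical points from $M_{f}(\Lambda)$, so that at most one directional derivative vanishes at the maximizer and only that one needs repairing, the other being preserved by $C^{1}$-openness), whereas you repair both derivatives at once with the explicit tilt $-\alpha x-\beta y$; that difference is organizational. However, your final paragraph does not close the argument. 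First, an orientation issue: density of $\partial_{s}\Lambda\cap\partial_{u}\Lambda$ plus continuity of $Df$ do \emph{not} give ``$a,b$ close to $0$ or negative''. The sign of $a=\partial_{x}f(z_0)$ is measured relative to the side on which $\Lambda$ accumulates at $z_0$, and a nearby corner point can have the opposite accumulation side from $z^{*}$ (compare the two endpoints of a single gap of a Cantor set); in that case $a\approx+|Df_{z^{*}}(e^{u}_{z^{*}})|$, a fixed positive number, and $\alpha$ cannot be small. You must choose $z_0$ with matching orientation, e.g.\ on $W^{s}_{loc}(z^{*})$, where the unstable boundary structure and its side propagate by the local product structure; this is not supplied by mere density.

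Second, and more seriously, the tuning of $(K,\alpha,\beta,r)$ is genuinely obstructed in exactly the case where the bump is needed. If both directional derivatives of $f$ at $z^{*}$ into its quadrant are nonzero, then $f\in\mathscr{A}'$ already; so the relevant case is, say, $Df_{z^{*}}(E^{s}_{z^{*}})=0$. At a corner point $z_0$ at distance $d$ from $z^{*}$ one then only gets $b=O(\|D^{2}f\|_{\infty}\,d)$, possibly positive, so the local-maximum condition forces $\beta\gtrsim\|D^{2}f\|_{\infty}\,d$. On the other hand $r$ is capped by the size of the neighborhood on which $\Lambda\cap U\subset Q$ holds, i.e.\ by the lengths of the gaps adjacent to $z_0$, and for the available choices of $z_0$ these are themselves of order $d$ (a gap adjacent to $z_0$ on the side of $z^{*}$ cannot exceed $d$, since $z^{*}\in\Lambda$). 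Hence the term $(\alpha+\beta)/r$ in your $C^{2}$ estimate is bounded below by a constant multiple of $\|D^{2}f\|_{\infty}$, independently of $\epsilon$, and the claim that ``every inequality above holds simultaneously'' fails. Repairing this requires a more careful selection of $z_0$ -- for instance a gap endpoint at which $f$ restricted to the relevant Cantor set is nearly maximal at the scale of the adjacent gap, so that the one-sided derivative into the set is small compared with the gap length -- and no such selection appears in your argument. (In fairness, the paper hides the identical difficulty in the unproved existence of $\varphi_{z'}$ satisfying properties 1--3 with $d_{C^{2}}(\varphi_{z'},1)<\epsilon$; but since you make the bump explicit, the gap becomes visible and must be addressed.)
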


\noindent Before proving Lemma \ref{L1S}, we remember  the definition of Morse functions.
\noindent Let $f\colon M \to \mathbb{R}$, $C^{r}, \ r\geq2$, we say that $f$ is a Morse function, if for all $x\in M$ such that $Df_{x}=0$ we have that 
$$D^{2}f(0)\colon T_{x}M\times T_{x}M \to \mathbb{R}$$
is nondegenerate, \emph{i.e.} if $D^{2}f(0)(v,w)=0$ for all $w\in T_{x}M$ implies $v=0$. Denote this set by $\mathscr{M}$. A known result says that
\textit{the set of Morse functions is open and dense in $C^{2}(M,\mathbb{R})$, $r \geq 2$.}  Note that in this case, the set $Crit(f)=\left\{x\in M: Df_{x}=0\right\}$ is a discrete set. In particular, since $\Lambda$ is a compact set, we have that $\# \left(Crit(f)\cap \Lambda\right)<\infty$. 

\begin{proof}[\bf{Proof of Lemma \ref{L1S}}]
It is enough to show simply that $\mathscr{A^\prime}$ is dense in $\mathscr{M}$ (\textit{the Morse functions}). Let $f_1\in \mathscr{M}$, then $\#Crit(f_1)<\infty$, so, since $int\Lambda=\emptyset$, we can find $f\in \mathscr{M}$, $C^{2}$-close to $f_1$ such that $M_{f}(\Lambda)\cap Crit(f)=\emptyset$. Therefore, if $z\in M_{f}(\Lambda)$, we have  $Df_{z}(e^{s}_{z})\neq 0$ or $Df_{z}(e^{u}_{z})\neq 0$.\\
If for some $z\in M_{f}(\Lambda)$, both $Df_{z}(e^{s}_{z})$ and $Df_{z}(e^{u}_{z})$ are nonzero, then $f\in \mathscr{A^{\prime}}$.
\\
\noindent In otherwise, suppose that $Df_{z}(e^{s}_{z})=0$ and $Df_{z}(e^{u}_{z})\neq 0$, then there are a $C^{2}$-neighborhood $\mathcal{V}$ of $f$ and a neighborhood $U$ of $z$, such that if $x\in U\cap \Lambda$ and $g\in \mathcal{V}$, then $Dg_{x}(e_{x}^{u})\neq 0$. Let $\mathcal{R}$ be a Markov partition of $\Lambda$, such that the element $R_z$ of $\mathcal{R}$  containing $z$ is contained in $U$. Without loss of generality, we can assume that $U$ is contained in a $C^{2}$-local chart $\phi:\tilde{U}\subset M\to V\subset \re^{2}$ with $U\subset \tilde{U}$ and $\tilde{U}\cap R'=\emptyset$ for all $R'\in \mathcal{R}\setminus \{R_z\}$. Observe that, since $Df_{z}(e^{s}_{z})=0$, then $z\in \partial_{s}\Lambda$, therefore the possible maximum points of $f$ in $\Lambda\cap R_{z}$ are on $W^{s}_{loc}(z)\cap \Lambda:=K^{s}$ (stable regular Cantor set),  which has zero Lebesgue measure. 
Consider the function $\psi^{s}:K^{s}\times \re \to \re^{2}$ defined by $$\psi^{s}(x,\alpha)=\nabla(f\circ\phi^{-1})(\phi(x))-\alpha 
\begin{pmatrix}
\ 0 &-1 \\ 1 & \ 0 \end{pmatrix}  D\phi_{x}(e^{s}_x),$$
where the above matrix is the orthogonal rotation. Since $\psi^{s}$ extends to a $C^{1}$-function, then the Lebesgue measure of $\psi^{s}(K^{s}\times \re)$ is zero. Therefore, there is $v\in \re^{2}$ with norm very small such that $v\notin \psi^{s}(K^{s}\times \re)$. Put $h(y)=f\circ\phi^{-1}(y)+\langle v,y \rangle$ for $y\in V$, thus $D(h\circ \phi)_{x}e^{s}_{x}=Dh_{\phi(x)}D\phi_{x}e^{s}_{x}\neq 0$ for all $x\in K^{s}$. Since $v$ can be taking with norm arbitrarily small, then $h\circ\phi$ is $C^{2}$-close to $f$ and since the function increases in the direction of its gradient, then the maximum points of $h$ in $\Lambda\cap R_z$ still can only appear in $K^{s}$. Thus $h$ satisfies the condition of lemma. 
   
\noindent Case $Df_{z}(e^{u}_{z})=0$ and $Df_{z}(e^{s}_{z})\neq 0$ is obtained analogously a function $C^{2}$-close to $f$ and in $\mathscr{A^{\prime}}$.\\
This concludes the proof of Lemma.
\end{proof}
\begin{Le}\label{L2S}
Let $f\in C^{1}(M,\mathbb{R})$ and $z\in M_{f}(\Lambda)$ such that $Df_{z}(e_{z}^{s,u})\neq 0$, then $z\in \partial_{s} \Lambda\cap \partial_{u}\Lambda$.
\end{Le}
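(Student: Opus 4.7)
The plan is to argue by contradiction along each one-dimensional invariant direction, exploiting that $z$ is a local maximum of $f$ on $\Lambda$ together with the non-degeneracy of $Df_z$ along $E^s_z$ and $E^u_z$.

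First I would fix the one-dimensional structure. Since $M$ is a surface and $\Lambda$ is a horseshoe, $E^u_z$ and $E^s_z$ are one-dimensional, and the local unstable manifold $W^u_\epsilon(z)$ is a $C^1$ curve through $z$ with $T_z W^u_\epsilon(z)=E^u_z$. Parametrize it as $\gamma\colon(-\delta,\delta)\to W^u_\epsilon(z)$ with $\gamma(0)=z$ and $\gamma'(0)\in E^u_z\setminus\{0\}$. The hypothesis $Df_z(E^u_z)\neq 0$ then gives $(f\circ\gamma)'(0)\neq 0$, so after possibly shrinking $\delta$, $f\circ\gamma$ is strictly monotonic on $(-\delta,\delta)$. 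After replacing $\gamma$ by $t\mapsto\gamma(-t)$ if necessary, assume $f\circ\gamma$ is strictly increasing, so that for every $t\in(0,\delta)$ we have $f(\gamma(t))>f(z)$.

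Now suppose for a contradiction that $z\notin\partial_s\Lambda$, i.e.\ $z$ is not a boundary point of $\Lambda$ in the unstable direction. Because $\Lambda$ is a (non-trivial) horseshoe on a surface, the slice $W^u_\epsilon(z)\cap\Lambda$ is a Cantor set, so $z$ is automatically an accumulation point of it. Failing to be a boundary point of that Cantor set therefore means that $z$ is approached by points of $W^u_\epsilon(z)\cap\Lambda$ from both sides along $\gamma$. In particular there is a sequence $t_n\downarrow 0$ with $\gamma(t_n)\in\Lambda$. For $n$ large, $t_n\in(0,\delta)$ and hence $f(\gamma(t_n))>f(z)$, contradicting $z\in M_f(\Lambda)$. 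Thus $z\in\partial_s\Lambda$.

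The argument for $z\in\partial_u\Lambda$ is identical with $W^s_\epsilon(z)$ in place of $W^u_\epsilon(z)$, using the hypothesis $Df_z(E^s_z)\neq 0$. I do not anticipate any real obstacle: the whole proof is a local one-variable monotonicity argument. The only mildly delicate point is the bookkeeping of the notion of boundary point, for which one must remember that $W^{u,s}_\epsilon(z)\cap\Lambda$ has no isolated points (a standard property of horseshoes), so that failure to be a boundary point really does force two-sided accumulation, which is what drives the contradiction with the maximality of $f$ at $z$.
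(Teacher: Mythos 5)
Your proof is correct, but it takes a genuinely different route from the paper's. The paper works in a full two--dimensional neighborhood of $z$: since $Df_z\neq 0$, the level curve $\alpha=f^{-1}(f(z))$ is a $C^1$ curve transverse at $z$ to both $W^{s}_{\epsilon}(z)$ and $W^{u}_{\epsilon}(z)$, and these three curves cut the neighborhood into six sectors; the three sectors on the side into which $\nabla f(z)$ points are free of $\Lambda$ by maximality, and the two remaining sectors adjacent to $\alpha$ are then excluded using the \emph{local product structure} (a point of $\Lambda$ there would bracket with $z$ into a forbidden sector), so that $\Lambda\cap U$ lies in a single quadrant determined by $W^{s}_{\epsilon}(z)$ and $W^{u}_{\epsilon}(z)$. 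You instead restrict $f$ to each local invariant manifold separately and run a one--variable strict--monotonicity argument; this dispenses with the level curve and with the local product structure altogether, needing only the (standard, and used elsewhere in the paper) fact that the slices $W^{s,u}_{\epsilon}(z)\cap\Lambda$ are perfect, so that failure to be a boundary point forces two--sided accumulation. Your version is shorter and proves exactly the stated conclusion; the paper's version additionally delivers the ``single quadrant'' localization of $\Lambda$ near $z$, which is what is implicitly invoked later in the proof of Lemma \ref{L5S} (though that localization can also be recovered from your conclusion together with the local product structure). No gap either way.
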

\begin{proof}[\bf{Proof}]
Using local coordinates in $z$, we can assume that we are in $U\subset \mathbb{R}^{2}$ containing $0$. \\The hypothesis of the lemma implies that $Df_{z}\neq 0$, \emph{i.e.}, $f(z)$ is a regular value of $f$, then $\alpha:=f^{-1}(f(z))$ is a $C^{1}$-curve transverse to $W^{s}_{\epsilon}(z)$ and $W^{u}_{\epsilon}(z)$ in $z$, also, the gradient vector $\nabla f(z)$ is orthogonal a $\alpha$ in $z$.\\

\begin{figure}[htbp]
	\centering
		\includegraphics[width=0.3\textwidth]{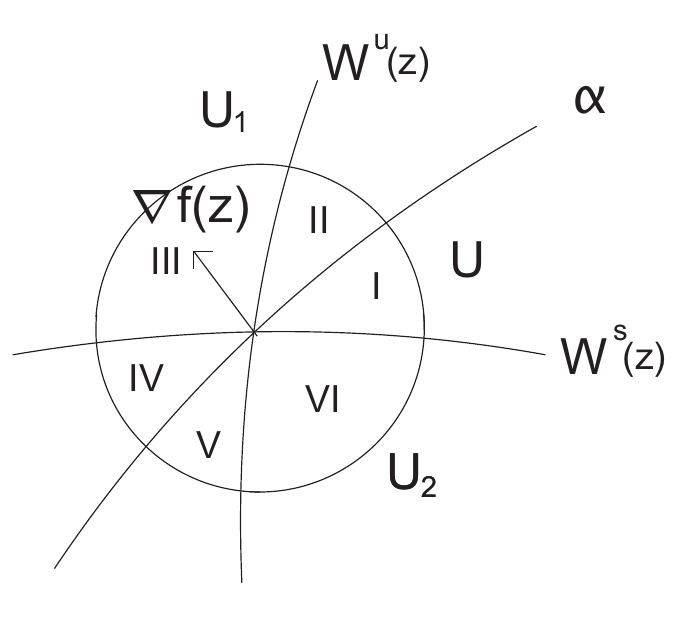}
	\caption{Localization of $z\in M_{f}(\Lambda)$}
	\label{fig:figuraspectrum2}
\end{figure}

\noindent Let $U$ be a small neighborhood $z$, then $\alpha$ subdivided into two regions $U$, say $U_{1}$, $U_{2}$ (see Figure 
\ref{fig:figuraspectrum2}). Now suppose that $\nabla f(z)$ is pointing in the direction of $U_{1}$, then in the region $I,II,III,IV$ and $V$, (see Figure \ref{fig:figuraspectrum2}), there are no points of $\Lambda$, in fact:\\
As the function increases in the direction of its gradient, then in the regions $II,III$ and $IV$, there are no points
of $\Lambda$, because $z\in M_{f}(\Lambda)$. If there are points in $I$ of $\Lambda$, then by the local product structure, there are points in $II$ of $\Lambda$, which we know can not happen. Analogously, if there are points in $V$ of $\Lambda$, then there are points in $IV$ of $\Lambda$, which we know can not happen.
In conclusion, the only region where there are points of $\Lambda$ is $VI$, so $z\in \partial_{s} \Lambda\cap \partial_{u}\Lambda$.

\end{proof}
\begin{R}\label{R1S}
\noindent Since, $C^{s}(M,\mathbb{R})$, $1\leq s\leq \infty$ is dense in $C^{r}(M,\mathbb{R})$, $0\leq r<s$, then the Lemma \ref{L1S} implies that $\mathscr{A^{\prime}}$ is dense in $C^{1}(M,\mathbb{R})$.
\end{R}

\begin{Le}\label{L4S}
The set $${H}_{1}=\left\{f\in C^{2}(M,\mathbb{R}):\#M_{f}(\Lambda)=1 \ \ and \ for \ \ z\in M_{f}(\Lambda), \ Df_{z}(e_{z}^{s,u})\neq 0\right\}$$
is dense in $C^{2}(M,\mathbb{R})$, therefore dense in $C^{1}({M,\re})$. 
\end{Le}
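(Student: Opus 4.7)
The strategy is to reduce to Lemma \ref{L1S}, which already gives density of $\mathscr{A}'$ in $C^2(M,\mathbb{R})$. It therefore suffices to show that every $f \in \mathscr{A}'$ can be $C^2$-approximated by functions in $H_1$. So I would fix $f \in \mathscr{A}'$ and pick $z \in M_f(\Lambda)$ with $Df_z(E^{s,u}_z) \neq 0$. The idea is to add a small, localized bump centered at $z$ that isolates $z$ as the \emph{unique} maximum of $f$ on $\Lambda$ while keeping the first derivative at $z$ unchanged.

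Concretely, I would choose a smooth function $\eta\colon M \to [0,1]$ supported in a small neighborhood $U$ of $z$, with $\eta(z) = 1$ and $\eta(x) < 1$ for every $x \neq z$; since $z$ is then the strict interior maximum of $\eta$, it is automatically a critical point of $\eta$, so $D\eta(z) = 0$. For $\delta > 0$ set $g := f + \delta \eta$. Then $\|g - f\|_{C^2} = \delta \|\eta\|_{C^2}$, which tends to $0$ as $\delta \to 0$, so $g$ is $C^2$-close to $f$. I claim $g \in H_1$. For uniqueness of the maximum: given $x \in \Lambda \setminus \{z\}$, either $f(x) = f(z)$, in which case $g(x) = f(z) + \delta\eta(x) < f(z) + \delta = g(z)$, or $f(x) < f(z)$, in which case $g(x) \leq f(x) + \delta < f(z) + \delta = g(z)$. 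Hence $M_g(\Lambda) = \{z\}$. For the transversality, since $D\eta(z) = 0$ we have $Dg_z = Df_z$, so $Dg_z(E^s_z) \neq 0$ and $Dg_z(E^u_z) \neq 0$ are inherited from $f$.

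I do not anticipate any serious obstacle; the only point requiring a moment's attention is the existence of a $C^2$ bump function with a strict interior maximum (as opposed to a plateau), which is routine — for instance one can take a standard radial bump composed with a smooth function that is strictly decreasing away from the origin. Combining the above approximation with Lemma \ref{L1S} yields density of $H_1$ in $C^2(M,\mathbb{R})$, and density in $C^1(M,\mathbb{R})$ then follows from the density of $C^2$ in $C^1$ (cf.\ Remark \ref{R1S}).
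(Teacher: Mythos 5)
Your proposal is correct and follows essentially the same route as the paper: the paper likewise reduces to density of $H_1$ in $\mathscr{A}'$ via Lemma \ref{L1S} and perturbs $f$ by a localized bump $\varphi_{\epsilon}-1$ (your $\delta\eta$) with a strict maximum at $z$ and vanishing derivative there, so that $z$ becomes the unique maximum on $\Lambda$ while $Dg_z=Df_z$ preserves the transversality condition. Your case analysis for uniqueness of the maximum and the explicit remark that $D\eta(z)=0$ merely make explicit what the paper leaves implicit.
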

\begin{proof}[\bf{Proof}]
By Lemma \ref{L1S}, it is enough to show that $H_{1}$ is dense in $\mathcal{A}'$.\\
Let $f\in \mathcal{A}'$, then there is $z\in M_{f}(\Lambda)$ such that $Df_{z}(e_{z}^{s,u})\neq 0$. Take $U$ a small neighborhood of $z$. Thus, given $\epsilon>0$  small, consider the function $\varphi_{\epsilon} \in C^{2}(M,\mathbb{R})$ such that $\varphi_{\epsilon}$ is $C^{2}$-close to constant function $0$, also $\varphi_{\epsilon}=0$ in $M\setminus U$, $\varphi_{\epsilon}(z)=\epsilon$ and $z$ is a single maximum of $\varphi_{\epsilon}$. Also, $\ds \varphi_{\epsilon}\stackrel{C^{2}}{\rightarrow}0$ as $\epsilon\rightarrow 0$.\\
Define $ g_{\epsilon}=f+\varphi_{\epsilon}$, clearly $g_{\epsilon}\stackrel{C^{2}}{\rightarrow}f$ as $\epsilon\rightarrow 0$, since $z\in M_{f}(\Lambda)$ we have $g_{\epsilon}(z)=f(z)+\varphi_{\epsilon}(z)
>f(x)+\varphi_{\epsilon}(x)=g_{\epsilon}(x)$ for all $x\in \Lambda$, this is $z\in M_{g_{\epsilon}}(\Lambda)$ and $\# M_{g_{\epsilon}}(\Lambda)=1$.\\
Also, $D(g_{\epsilon})_{z}(e_{z}^{s,u})=Df_{z}(e_z^{s,u})\neq 0$, that is, $g_{\epsilon}\in H_{1}$.
\end{proof}

\begin{Le}\label{L5S}
The set $$H_{\varphi}=\left\{f\in C^{1}(M,\mathbb{R}):\#M_{f}(\Lambda)=1 \ \ and \ for \ \ z\in M_{f}(\Lambda), \ Df_{z}(e_{z}^{s,u})\neq 0\right\}$$ is open.
\end{Le}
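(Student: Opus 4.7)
The plan is to show that every $f\in H_\varphi$ admits a $C^1$-neighborhood $\mathcal V$ with $\mathcal V \subset H_\varphi$. Let $z$ denote the unique point of $M_f(\Lambda)$. The argument splits naturally into three parts: localization of $M_g(\Lambda)$ to a prescribed neighborhood of $z$, persistence of the nondegeneracy condition on $E^{s,u}$, and uniqueness of the maximizer inside that neighborhood.

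Localization is standard: since $z$ is a strict maximum of $f|_\Lambda$ and $\Lambda$ is compact, every open $U \ni z$ admits $\delta > 0$ with $f(y) \le f(z) - \delta$ for all $y \in \Lambda \setminus U$, so $M_g(\Lambda) \subset U$ whenever $\|g - f\|_{C^0} < \delta/3$. Persistence is also straightforward: since $E^s, E^u$ depend continuously on the base point in $\Lambda$ (in fact $C^1$-continuously), the map $(g, y) \mapsto (Dg_y(e^s_y), Dg_y(e^u_y))$ is continuous on $C^1(M,\mathbb{R}) \times \Lambda$, so the open condition that both coordinates are nonzero at $(f, z)$ extends to a product neighborhood; combined with localization, any maximizer $z_g$ of $g$ automatically satisfies $Dg_{z_g}(E^{s,u}_{z_g}) \neq 0$.

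The heart of the proof, and the main obstacle, is local uniqueness. By Lemma \ref{L2S}, $z \in \partial_s \Lambda \cap \partial_u \Lambda$, and the local product structure forces $\Lambda \cap U$ to lie in a single curvilinear quadrant $R$ bounded by $W^s_\epsilon(z)$ and $W^u_\epsilon(z)$. In local coordinates at $z$ adapted to the splitting $E^s_z \oplus E^u_z$, the linear quadrant $Q \subset T_z M$ tangent to $R$ is fixed; since $z$ maximizes $f|_\Lambda$ with $Df_z(E^s_z), Df_z(E^u_z) \ne 0$, both components of $Df_z$ have the same sign opposite to $Q$, and consequently $Df_z(v) \leq -c_0 |v|$ for every $v \in Q$, where $c_0 > 0$ is obtained by compactness from the strict inequality on $Q \cap \{|v| = 1\}$. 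Since $W^{s,u}_\epsilon(z)$ are tangent to $E^{s,u}_z$ at $z$, the region $R$ approximates $Q$ to first order, so a Taylor expansion of $f$ around $z$ yields $f(y) - f(z) \le -(c_0/2)\,|y - z|$ for every $y \in \Lambda \cap U \setminus \{z\}$ once $U$ is chosen sufficiently small. The mean-value inequality $|(g-f)(y) - (g-f)(z)| \leq \|g - f\|_{C^1}\,|y - z|$ then gives $g(y) - g(z) \leq (\|g - f\|_{C^1} - c_0/2)\,|y - z| < 0$ whenever $\|g - f\|_{C^1} < c_0/4$. Combined with the first two steps, this shows $M_g(\Lambda) = \{z\}$ with $Dg_z(E^{s,u}_z) \ne 0$, so $g \in H_\varphi$.
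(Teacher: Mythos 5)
Your proof is correct and follows essentially the same route as the paper's: a $C^0$-closeness argument confining $M_g(\Lambda)$ to a prescribed neighborhood $U$ of $z$, combined with the cone/quadrant picture coming from Lemma \ref{L2S} to show that $z$ remains the strict maximizer on $U\cap\Lambda$ for every $g$ that is $C^1$-close to $f$. Your quantitative treatment of the local step (the constant $c_0$ on the tangent cone and the mean-value estimate) in fact supplies the uniformity over the neighborhood of $f$ that the paper merely asserts.
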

\begin{proof}[\bf{Proof}]
Let $f\in H_{\varphi}$ and $z\in M_{f}(\Lambda)$ with $Df_{z}(e_{z}^{s,u})\neq 0$, where $e^{s,u}_{z}\in E^{s,u}_{z}$ is a unit vector, respectively. Suppose that  $\ds \frac{\partial {f}}{\partial{e_{z}^{s,u}}}=\left\langle \nabla f(z),e_{z}^{s,u}\right\rangle=Df_{z}(e_{z}^{s,u})>0$ and $\nabla f(z)$ is the gradient vector of $f$ at $z$.\\
(If we have to, $Df_{z}(e_{z}^{s})>0$ and $Df_{z}(e_{z}^{u})<0$, we consider the basis $\{e^{s}_{z},-e^{u}_{z}\}$ of $T_{z}M$ or vice versa).

\vspace{0.2cm}
\noindent Let $\mathcal{U}\subset C^{1}(M,\mathbb{R})$ an open neighborhood of $f$ such that, for all $g\in \mathcal{U}$ we have $\ds\frac{\partial {g}}{\partial{e_{z}^{s,u}}}>0$.
\ \\
The set $\left\{e_{z}^{s}, e_{z}^{u}\right\}$ is basis of $T_{z}M$. Let 
$$V=\left\{v\in T_{z}M:v=a_{v}e_{z}^{s}+b_{v}e_{z}^{u}, \ \ a_{v}, b_{v}\geq 0\right\}.$$ 
Let $v\in V\setminus \{0\}$, then $\ds\frac{\partial {g}}{\partial{v}}(z)=Dg_{z}(v)>0$, for any $g\in \mathcal{U}$. Since by Lemma \ref{L2S} we have that $z\in \partial_{s} \Lambda\cap \partial_{u}\Lambda$, this implies that, there is an open set $U$ of $z$ such that $g(z)>g(x)$, for all $g\in \mathcal{U}$ and all $x\in U \cap \Lambda \setminus \{z\}$.\\
\ \\
Let $\epsilon>0$ such that $\left|f(z)-f(x)\right|>\frac{\epsilon}{2}$ for $x\in \Lambda\setminus U$. 
Let $$\ds V_{\frac{\epsilon}{8}}(f)=\left\{g \in C^{1}(M,\mathbb{R}): \left\|f-g\right\|_{\infty}<\frac{\epsilon}{8} \ \text{and} \left\|Df-Dg\right\|_{\infty}<\frac{\epsilon}{8}\right\}$$ 
a fundamental neighborhood of $f$, then we claim that, for all $g\in V_{\frac{\epsilon}{8}}(f)$, the set $M_{g}(\Lambda)\subset U$. In fact: Let $x\in \Lambda\setminus U$, then 

\begin{eqnarray*}
g(z)-g(x) & = &g(z)-f(z)+f(z)-g(x)-f(x)+f(x) \\ 
&\geq &f(z)-f(x)-\left|g(z)-f(z)\right|-\left|g(x)-f(x)\right|\\
&\geq & \frac{\epsilon}{2}-2\frac{\epsilon}{8}=\frac{\epsilon}{4}.
\end{eqnarray*}
In particular, $g(z)>g(x), \forall x\in \Lambda \setminus U$, and so $M_g(\Lambda)=\{z\}$.

\noindent This implies that the open set $\mathcal{U}_{1}=\mathcal{U}\cap V_{\frac{\epsilon}{8}}(f)$ is contained in $H_{\varphi}$. 
\end{proof}
\noindent Now we are in condition to prove Theorem \ref{TH1}

\begin{proof}[\bf{Proof of Theorem \ref{TH1}}]
\noindent Since, $H_{1}\subset H_{\varphi}$ and by Lemma \ref{L4S} the set $H_1$ is dense in $C^{1}(M,\re)$, then $H_{\varphi}\subset C^{1}(M,\mathbb{R})$ is dense and open in $C^{1}(M,\mathbb{R})$. 
\end{proof}

\section{The Markov and Lagrange Dynamical Spectrum and Image of Sub-Horseshoes}\label{MLDSS}
In this section we prove that the Lagrange and Markov dynamical spectrum contains the image of sub-horseshoe by a real function. \\
\ \\
\noindent
Recall that the set $${H}_{\varphi}=\left\{f\in C^{1}(M,\mathbb{R}):\# M_{f}(\Lambda)=1 \ \ \text{and for} \ \ z\in M_{f}(\Lambda), \ Df_{z}({e^{s,u}_{z}})\neq 0\right\}$$
is open and dense.\\
Let $f\in H_{\varphi}$ and $x_{M}\in M_{f}(\Lambda)$, then by Lemma \ref{L2S} we have that $x_{M}\in \partial_{s}\Lambda\cap \partial_{u}\Lambda$, by Theorem [PN], we have that there are $p,q\in \Lambda$ periodic points such that 
$$x_{M}\in W^{s}(p)\cap W^{u}(q).$$

\noindent Assume that $p$ and $q$ have the symbolic representation $$(\cdots,a_{1},\cdots,a_{r},a_{1},\cdots ,a_{r},\cdots) \ \ \text{and}\ \  
(\cdots,b_{1},\cdots,b_{s},b_{1},\cdots,b_{s},\cdots)$$
respectively.\\
So, there are $l$ symbols $c_{1},\cdots,c_{l}$ such that $x_{M}$ is symbolically of the form 
$$\Pi^{-1}(x_{M})=(\cdots,b_{1},\cdots,b_{s},b_{1},\cdots,b_{s},c_{1},\cdots,c_{t},\cdots,c_{l},a_{1},\cdots,a_{r},a_{1},\cdots ,a_{r},\cdots)$$
where $c_{t}$ is the zero position of $\Pi^{-1}(x_{M})$.\\

\noindent Let $\und{q}_{\tilde{s}}=(q_{-\tilde{s}},\cdots,q_0,\cdots,q_{\tilde{s}})$ an admissible word such that $x_{M}\in R_{\und{q}_{\tilde{s}}}=\bigcap^{\tilde{s}}_{i=-\tilde{s}}\varphi^{-i}(R_{q_i})$, as in the Figure \ref{fig:figuraspectrum}, and put a sub-horseshoe $\tilde{\Lambda}:=\bigcap_{n\in \mathbb{Z}}\varphi^{n}(\Lambda\setminus R_{\und{q}_{\tilde{s}}})$, thus there exists $U$ an open set such that $U\cap\Lambda=\Lambda\setminus R_{\und{q}_{\tilde{s}}}$ and $$\tilde{\Lambda}:=\bigcap_{n\in \mathbb{Z}}\varphi^{n}(U).$$

\begin{figure}[htbp]
	\centering
		\includegraphics[width=0.50\textwidth]{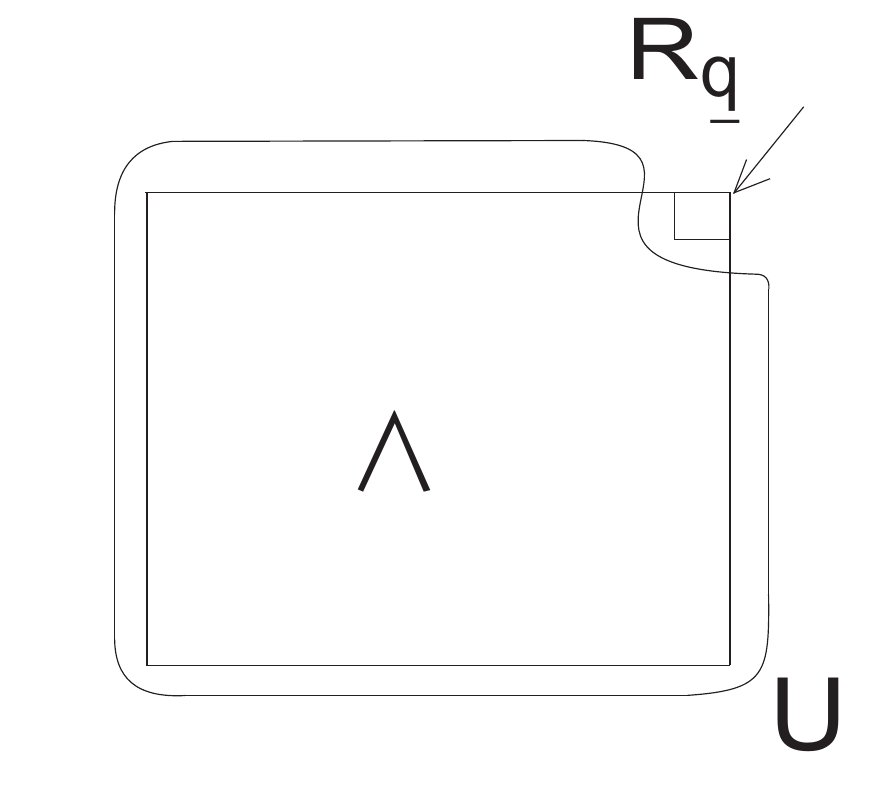}
	\caption{Removing the point of maximum}
	\label{fig:figuraspectrum}
\end{figure}

\noindent Take $\tilde{s}\in \mathbb{N}$ sufficiently large such that the Hausdorff dimension of $\tilde{\Lambda}$ is close the Hausdorff dimension of $\Lambda$, (cf. Lemma \ref{LS7}).

\noindent Let $d\in \tilde{\Lambda}$, call $\underline{d}=(\cdots,d_{-n},\cdots,d_{0},\cdots,d_{n},\cdots)$ its symbolic representation. Given $\epsilon>0$ small, take
 $n_{0}\in \mathbb{N}$ such that $\sum_{|n|\geq n_{0}}2^{-(2|n|+1)}<\epsilon$ and put  ${\underline{d}}_{n_{0}}=(d_{-n_{0}},\cdots,d_{n_{0}})$ an admissible finite word. Denote the cylinder \\ $C_{{\underline{d}}_{n_{0}}}=\left\{ \underline{w}\in \A^{\mathbb{Z}}:w_{i}=d_{i}\ \  \text{for} \ \ i=-n_{0},\cdots,n_{0}\right\}$. Then, the set
$$C_{{\underline{d}}_{n_{0}},B}:=\Sigma_{B}\cap C_{{\underline{d}}_{n_{0}}}=\left\{\underline{w}\in \Sigma_{B}:w_{i}=d_{i}\ \  \text{for} \ \ i=-n_{0},\cdots,n_{0} \right\}$$
is not empty and contains a periodic point.
\vspace{0.3cm}
\\
Since of $N_{N_{0}}(x,y,B)>0$ (cf. section \ref{Prel}), there are admissible strings $\underline{e}=(e_{1},\cdots,e_{k_0-1})$ and $\underline{f}=(f_{1},\cdots,f_{j_{0}-1})$ joining $d_{0}$ with $b_{1}$ and $a_{r}$ with $d_{1}$, respectively with $k_0,j_0<N_0$.
Since $x_{M}$ is a unique maximum point of $f$ in $\Lambda$, then if $\epsilon>0$ is small enough we can take $\tilde{\tilde{s}}>\tilde{s}$ and $\und{q}_{\tilde{\tilde{s}}}=(q_{-\tilde{\tilde{s}}},\cdots,q_0,\cdots,q_{\tilde{\tilde{s}}})$ an admissible word such that $x_{M}\in R_{\und{q}_{\tilde{\tilde{s}}}}=\bigcap^{\tilde{\tilde{s}}}_{i=-\tilde{\tilde{s}}}\varphi^{-i}(R_{q_i})\subset R_{\und{q}_{\tilde{s}}}$ and 

	\begin{equation}\label{In1}
	\sup \tilde{f}|_{\Pi^{-1}(\tilde{\Lambda})_{\epsilon}}<\inf \tilde{f}|_{\Pi^{-1}(R_{\und{q}_{\tilde{\tilde{s}}}}\cap \Lambda)},
	\end{equation}
	where $\tilde{f}=f\circ \Pi$ and $\Pi^{-1}(\tilde{\Lambda})_{\epsilon}=\{\und{x}\in \Sigma_{B}:d(\und{x}, \Pi^{-1}(\tilde{\Lambda}))<\epsilon\}$. 

\noindent Let $k\in\mathbb{N}$, $k>N_{0}$, and $k(s+r)+l>\tilde{\tilde{s}}$, then given the word $(a_{1},\cdots,a_{r})$ and $(b_{1},\cdots,b_{s})$, we define the word 
$$(a_{1},\cdots,a_{r})^{k}=\underbrace{(a_{1},\cdots,a_{r},\cdots \cdots,a_{1},\cdots,a_{r})}_{k \ times}$$
and 
$$(b_{1},\cdots,b_{s})^{k}=\underbrace{(b_{1},\cdots,b_{s},\cdots\cdots,b_{1},\cdots,b_{s})}_{k \ times}.$$
Put the word  $$\alpha=((b_{1},\cdots,b_{s})^{k},c_{1},\cdots,c_t,\cdots,c_{l},(a_{1},\cdots,a_{r})^{k}),$$
where $c_{t}$ is the zero position of the word $\alpha$.

\noindent So, fixed the words $\und{e}$ and $\und{f}$, we can define the following application, defined for all $\und{x}\in C_{{\underline{d}}_{n_{0}},B}$ by 
\begin{eqnarray*}
A(\und{x})=(\cdots,x_{-1},x_{0},e_{1},\cdots,e_{k_0-1},(b_{1},\cdots,b_{s})^{k},c_{1},\cdots,\text{\ \ \ \ \ \ \ }\\ ,c_t,
\cdots ,c_{l},(a_{1},\cdots,a_{r})^{k},f_{1},\cdots,f_{j_{0}-1},x_{1},x_{2},\cdots), 
\end{eqnarray*}
where $c_{t}$ is the zero position of the word $A(\und{x})$.
Given a finite word $\underline{a}=(a_{1},\cdots,a_{n})$, denote by $\left|\underline{a}\right|=n$, the length of the word $\underline{a}$. Then, since $k>N_{0}\geq \max\{k_{0},j_{0}\}$, we have 
$$\left|\underline{e}\right|,\left|\underline{f}\right|, \tilde{\tilde{s}}<\left|\alpha\right|=k(s+r)+l.$$
where $\underline{e}=(e_{1},\cdots, e_{k_0-1})$ and $\underline{f}=(f_{1},\cdots, f_{j_0-1})$.\\
Now we may characterize $\sup_{n\in \mathbb{Z}}\tilde{f}(\sigma^{n}(A(\und{x})))$ for $\und{x}\in C_{{\underline{d}}_{n_{0}},B}\cap \Pi^{-1}(\tilde{\Lambda})$. In fact:

\noindent Observe that $(\sigma^{l-t+kr+j_0-1}(A(\und{x})))^{+}=\und{x}^+$, call $\tau=l-t+kr+j_0-1$, then by the choice of $n_{0}$ we have that 
$d(\sigma^{\tau+n_0+n}(A(\und{x})), \sigma^{n_0+n}(\und{x}))<\epsilon$  for all $n\geq 0$. Analogously, call $\eta=-(t+sk+k_0-1)$, then $d(\sigma^{\eta-n_0-n}(A(\und{x})), \sigma^{-n_0-n}(\und{x}))<\epsilon$ for all $n\geq 0$. Moreover,  since $\Pi^{-1}(\tilde{\Lambda})$ is a $\sigma$-invariant set, then if $\und{x}\in \Pi^{-1}(\tilde{\Lambda})$ the inequality (\ref{In1}) implies that 

$$\tilde{f}(\sigma^{\tau+n_0+n}(A(\und{x}))),  \tilde{f}(\sigma^{\eta-n_0-n}(A(\und{x})))< \inf \tilde{f}|_{\Pi^{-1}(R_{\und{q}_{\tilde{\tilde{s}}}}\cap \Lambda)} \ \text{for all} \ n\geq 0.$$

\noindent The inequality above, implies that for all $\und{x}\in C_{{\underline{d}}_{n_{0}},B}\cap \Pi^{-1}(\tilde{\Lambda})$ there is \\$j\in \{\eta-n_0,\dots, \tau+n_0\}$  such that  $\sup_{n\in\mathbb{Z}}\tilde{f}(\sigma^{n}(A(\und{x})))=\tilde{f}(\sigma^{j}(A(\und{x})))$.

\noindent Put $\Pi^{-1}(x)=\und{x}$, define the set  
$$\tilde{\Lambda}_{j}:=\{x\in\tilde{\Lambda}\cap\Pi(C_{{\underline{d}}_{n_{0}},B}):  \sup_{n\in\mathbb{Z}}\tilde{f}(\sigma^{n}(A(\und{x})))=f(\sigma^{j}(A(\und{x})))\}.$$
Thus, 
\begin{equation}\label{Ajeitar}
\ds\tilde{\Lambda}\cap\Pi(C_{{\underline{d}}_{n_{0}},B})=\bigcup_{j=\eta-n_0}^{\eta+n_0}\tilde{\Lambda}_{j}.
\end{equation}
The equality (\ref{Ajeitar}) implies that there is $i_{0}\in\{\eta-n_0,\dots,\tau+n_0\}$ such that $\tilde{\Lambda}_{i_0}$ has non empty interior in $\tilde{\Lambda}\cap\Pi(C_{{\underline{d}}_{n_{0}},B})$, so
\begin{equation}\label{Ajeitar1}
HD(\ds\tilde{\Lambda})=HD(\ds\tilde{\Lambda}\cap\Pi(C_{{\underline{d}}_{n_{0}},B})=HD(\tilde{\Lambda}_{i_0}).
\end{equation}
Therefore, for $\und{x}\in \Pi^{-1}(\tilde{\Lambda}_{i_0})$ we have 
\begin{equation}\label{E1S}
\sup_{n\in \mathbb{Z}}\tilde{f}(\sigma^{n}(A(\und{x})))=\tilde{f}(\sigma^{i_0}(A(\und{x}))).
\end{equation}

\ \\ 
\noindent The next goal is to show that $\tilde{A}=\Pi \circ A\circ\Pi^{-1}$ extends to a local diffeomorphism.\\
\ \\
\noindent First we show that $\tilde{A}$ extends to a local diffeomorphism in stable and unstable manifolds of $d$, $W^{s}_{loc}(d)$ and $W^{u}_{loc}(d)$.
\ \\

\noindent As $\Lambda$ is symbolically the product $\Sigma_{\B}^{-}\times \Sigma_{\B}^{+}$, (cf. Appendix \ref{sec EMAH}), put $\beta$ the finite word ($\beta=\und{e}\alpha \und{f})$. Using the notation of Appendix \ref{sec RCSEMAH}, we have that for $$x^{u}\in W^{u}_{loc}(d)\cap \Lambda,  \ \ \text{then}  \ \    \ f^{u}_{\beta}(x^{u})\in W^{u}(d)\cap \Lambda \ \ \text{and} \ \ (\Pi^{-1}(f^{u}_{\beta}(x^{u})))^{+}=\beta(\Pi^{-1}(x^{u}))^{+},$$ also   $$x^{s}\in W^{s}_{loc}(d)\cap \Lambda, \ \ \text{then} \ \ f^{s}_{\beta}(x^{s})\in W^{s}(d)\cap\Lambda \ \  \text{and} \ \ (\Pi^{-1}(f^{s}_{\beta}(x^{s})))^{-}=(\Pi^{-1}(x^{s}))^{-}\beta.$$
The position zero of $\Pi^{-1}\left(\varphi^{-\left|\beta\right|+1}\left(f^{s}_{\beta}(x^{s})\right)\right)$, is equal to $(\beta)_{0}=e_{1}$, this is
$$\left(\Pi^{-1}\left(\varphi^{-\left|\beta\right|+1}\left(f^{s}_{\beta}(x^{s})\right)\right)\right)_{0}=(\beta)_{0}=\left(\Pi^{-1}\left(f^{u}_{\beta}(x^{u})\right)\right)_{0}.$$
So, we can define the bracket $$\ds\left[\Pi^{-1}\left(f^{u}_{\beta}(x^{u})\right),\Pi^{-1}\left(\varphi^{-\left|\beta\right|+1}\left(f^{s}_{\beta}(x^{s})\right)\right)\right]=\left(\Pi^{-1}(x^{s})\right)^{-}\beta \left(\Pi^{-1}(x^{u})\right)^{+}=A\left[\Pi^{-1}(x^{u}),\Pi^{-1}(x^{s})\right].$$
Note that for $x^{u},x^{s}$ sufficiently close to $d$ the bracket $\left[\Pi^{-1}(x^{u}),\Pi^{-1}(x^{s})\right]$ is well defined.\\
As $\Pi$ is a morphism of the local product structure, then 
\begin{eqnarray}\label{E2S}
\left[f^{u}_{\beta}(x^{u}),\varphi^{-\left|\beta\right|+1}\left(f^{s}_{\beta}(x^{s})\right)\right]&=&\Pi\left(\left[\Pi^{-1}\left(f^{u}_{\beta}(x^{u})\right),\Pi^{-1}\left(\varphi^{-\left|\beta\right|+1}\left(f^{s}_{\beta}(x^{s})\right)\right)\right]\right)\nonumber \\ 
&=& \Pi(A\left[\Pi^{-1}(x^{u}),\Pi^{-1}(x^{s})\right])=\tilde{A}\left[x^{u},x^{s}\right].
\end{eqnarray}

\noindent Put $\tilde{A}_{1}(x^{u})=f^{u}_{\beta}(x^{u})$ and $\tilde{A}_{1}(x^{s})=\varphi^{-\left|\beta\right|+1}(f^{s}_{\beta}(x^{s}))$, therefore, $\ds \tilde{A}\left[x^{u},x^{s}\right]=[\tilde{A}_{1}(x^{u}),\tilde{A}_{2}(x^{s})]$. Thus, we have the following lemma.
\begin{Le}\label{L6S}
If $\varphi$ is a $C^{2}$-diffeomorphism, then $\tilde{A}$ extends to a local $C^{1}$-diffeomorphism defined in neighborhood $U_{d}$ of $d$. We may assume without loss of generality (increasing $n_0$, if necessary) that $U_d\supset \ds\tilde{\Lambda}\cap\Pi(C_{{\underline{d}}_{n_{0}},B})$.
\end{Le}
\begin{proof}[\bf{Proof}]
As $\varphi$ is a $C^{2}$-diffeomorphism of a closed surface, then the stable and unstable foliations of the horseshoe $\Lambda$, $\mathscr{F}^{s}(\Lambda)$ and $\mathscr{F}^{u}(\Lambda)$  can be extended to $C^{1}$ invariant foliations defined on a full neighborhood of $\Lambda$. Also, if $\varphi$ is a $C^{2}$-diffeomorphism, then $f^{s}_{\beta}$ and $f^{u}_{\beta}$ are at least $C^{1}$, then by  (\ref{E2S}) we have the result.
\end{proof}
\noindent An immediate consequence of Lemma \ref{L6S} and the equality (\ref{E1S}) is:
\begin{C}\label{C5S}
If $x\in \tilde{\Lambda}_{i_0}$, then $\sup_{n\in\mathbb{Z}}f(\varphi^{n}(\tilde{A}({x})))=f(\varphi^{i_{0}}(\tilde{A}({x})))$. 
\end{C}
\noindent This Corollary implies that $\{f(\varphi^{i_{0}}(\tilde{A}({x}))):x\in \tilde{\Lambda}_{i_0}\}\subset M(f,\Lambda)$. 
\begin{R}\label{tilde A}
We have $Df_{x_{M}}(e_{x_{M}}^{s,u})\neq 0$, so this property is true in neighborhood of $x_{M}$. Since, for every $x\in \tilde{\Lambda}_{i_0}$, $\varphi^{i_{0}}(\tilde{A}({x}))$ belongs to a small neighborhood of $x_M$, then for every $x\in \tilde{\Lambda}_{i_0}$, $Df_{\varphi^{{i_0}}(\tilde{A}(x))}(e^{s,u}_{\varphi^{{i_0}}(\tilde{A}(x)})\neq 0$. Moreover, $D\varphi^{{i_0}}_{\tilde{A}(x)}(e^{s,u}_{\tilde{A(x)}})\in E^{s,u}_{\varphi^{{i_0}}(\tilde{A}(x))}$ and since by construction of $\tilde{A}$, we have that $\dfrac{\partial \tilde{A}}{\partial e_{x}^{s,u}}\parallel e^{s,u}_{\tilde{A}(x)}$, then for every $x\in \tilde{\Lambda}_{i_0}$ we have that  
$D(f\circ \varphi^{i_{0}} \circ\tilde{A})_x(e_{x}^{s,u})\ne 0$.
\end{R}
\ \\
\noindent Now we will prove the same for the Lagrange spectrum.

\vspace{0.3cm}
\noindent  Let $\epsilon$, $n_0$, and $\tilde{\tilde{s}}$ are as above. Then, using the above notation, let $x\in\tilde{\Lambda}\cap \Pi(C_{{\underline{d}}_{n_{0}},B})$ and $\Pi^{-1}(x)=(\cdots,x_{-n},\cdots,x_{0},\cdots,x_{n},\cdots)$. Thus, there is admissible string ${E}_{i}=(e_{1}^{i},\cdots, e_{s_{i}}^{i})$ joining $x_{i}$ with $x_{-i}$ the lenght $\left|{E}_{i}\right|=m_{i}-1<N_{0}$ for each $i$ (cf. section \ref{Prel}).

\noindent So, we can define the following map for all $\und{x}\in C_{{\underline{d}}_{n_{0}},B}$ by
\begin{eqnarray*}
A_{1}(\und{x})=(\cdots,x_3,E_3,x_{-3},x_{-2},x_{-1},x_{0},\beta,x_{1},x_{2},E_{2},x_{-2},x_{-1},x_{0},\beta, x_{1},E_{1},x_{-1},x_{0},\\ ,\beta,x_{1},E_{1},x_{-1},x_{0},\beta,x_{1},x_{2},E_{2},x_{-2},x_{-1},x_{0}, \beta,x_{1},x_2,x_3,E_3,x_{-3},\cdots)
\end{eqnarray*}
where $\beta=\und{e}\alpha \und{f}$ as above.\\

\noindent Since $|E_{i}|<N_0$, then the set of words $\{E_i:i\in \mathbb{N}^*\}$ is finite, therefore 
$$\{E_i:i\in \mathbb{N}^*\}=\{D_1,\dots D_{m}\},$$
for some admissible words $D_{i}$ with $|D_{i}|<N_0$.
Now we can take $k>N_0+2n_0$ and if necessary increassing $\tilde{\tilde{s}}$, we have that since $|D_{i}|<N_{0}$, then 
for each $i$, there exists a neighborhood $\mathcal{U}_i$ of $D_i$ for which
\begin{equation}\label{In2}
\sup \tilde{f}|_{\sigma^{r}(\mathcal{U}_i)}<\inf \tilde{f}|_{\Pi^{-1}(R_{\und{q}_{\tilde{\tilde{s}}}}\cap \Lambda)} \ \ \text{for} \ \ |r|\leq n_{0}+|D_i|<n_0+N_0.
\end{equation}
Now we may characterize  $\limsup_{n\to \infty}\tilde{f}(\sigma^{n}(A_1(\und{x})))$ for $\und{x}\in C_{{\underline{d}}_{n_{0}},B}\cap \Pi^{-1}(\tilde{\Lambda})$, in fact:\\
Let $m(n)\in \mathbb{N}$ such that 
$$(\sigma^{m(n)}(A_{1}(\und{x})))^{+}=x_1,x_2,\cdots,x_nE_{n}x_{-n},\cdots,x_0,\cdots \ \ \text{and} \ \ n\geq 2n_{0}.$$
Let $k^{*}$ be such that $n-k^*=n_0$, then by definition of $n_0$ we have that $$d(\sigma^{m(n)+n_0+j}(A_1(\und{x})), \sigma^{n_0+j}(\und{x}))<\epsilon \ \text{for all} \ j=0, \dots,k^*-n_{0},$$
and 
$$d(\sigma^{m(n)+n+|E_n|+n_0+j}(A_1(\und{x})), \sigma^{-k^*+j}(\und{x}))<\epsilon \ \text{for all} \ j=0, \dots,k^*-n_{0}.$$

\noindent Moreover,  since $\Pi^{-1}(\tilde{\Lambda})$ is a $\sigma$-invariant set, then, if $\und{x}\in \Pi^{-1}(\tilde{\Lambda})$ the inequality (\ref{In1}) implies that 

$$\tilde{f}(\sigma^{m(n)+n_0+j}(A_1(\und{x})))< \inf \tilde{f}|_{\Pi^{-1}(R_{\und{q}_{\tilde{\tilde{s}}}}\cap \Lambda)} \ \text{for all} \  j=0, \dots,k^*-n_{0},$$
and
$$\tilde{f}(\sigma^{m(n)+n+|E_n|+n_0+j}(A_1(\und{x})))< \inf \tilde{f}|_{\Pi^{-1}(R_{\und{q}_{\tilde{\tilde{s}}}}\cap \Lambda)} \ \text{for all} \ j=0, \dots,k^*-n_{0}.$$
Also, 
$$\sigma^{m(n)+k^*+s}(A_{1}(\und{x}))\in \sigma^{|E_{n}^{-}|+n_0-s}(\mathcal{U}_{i(n)}) \ \text{for all} \ s=0,\dots,n_0+|E_{n}^{-}|,$$
and

$$\sigma^{m(n)+n+|E_{n}^{-}|+s}(A_{1}(\und{x}))\in \sigma^{-s}(\mathcal{U}_{i(n)}) \ \text{for all} \ s=0,\dots,n_0+|E_{n}^{+}|,$$
where $E_{n}=E_{n}^{-}E_{n}^{+}$ and  $i(n)\in\{1,\dots,m\}$.
Therefore, the inequality (\ref{In2}) implies that 
$$\tilde{f}(\sigma^{m(n)+k^*+s}(A_{1}(\und{x})))<\inf \tilde{f}|_{\Pi^{-1}(R_{\und{q}_{\tilde{\tilde{s}}}}\cap \Lambda)} \ \text{for all}\ s=0,\dots,n_0+|E_{n}^{-}|,$$
and 
$$\tilde{f}(\sigma^{m(n)+n+|E_{n}^{-}|+s}(A_{1}(\und{x})))<\inf \tilde{f}|_{\Pi^{-1}(R_{\und{q}_{\tilde{\tilde{s}}}}\cap \Lambda)} \ \text{for all} \ s=0,\dots,n_0+|E_{n}^{+}|.$$

\noindent Note that, if $n_0\leq n<2n_{0}$, then $k^{*}<n_{0}$  and the last two cases applies. Therefore, the four last inequalities above implies that for all $\und{x}\in C_{{\underline{d}}_{n_{0}},B}\cap \Pi^{-1}(\tilde{\Lambda})$ there is $j\in\{\eta-n_0, \dots,\tau+n_0\}$ and a sequence $n_{k}(j)$ with 
$$\limsup_{n\to \infty}\tilde{f}(\sigma^{n}(A_1(\und{x})))=\sup_{k}\tilde{f}(\sigma^{n_{k}(j)}(A_{1}(\und{x})))\ \text{and} \ \left(\sigma^{n_{k}(j)}(A_1(\und{x})) \right)_{0}=\left( A_{1}(\und{x})\right)_{j} $$
for all $k$, where $\eta=-(t+ks+k_0-1)$ and $\tau=(l-t+kr+j_0-1)$ as above, are the length of negative and positive part of the finite word $\beta=\und{e}\alpha \und{f}$, respectively. 

\noindent Put $\Pi^{-1}(x)=\und{x}$, define the set  
$${\Lambda}'_{j}:=\{x\in\tilde{\Lambda}\cap\Pi(C_{{\underline{d}}_{n_{0}},B}):  \limsup_{n\to \infty}\tilde{f}(\sigma^{n}(A_1(\und{x})))=\ds \sup_{k}\tilde{f}(\sigma^{n_{k}(j)}(A_1(\und{x})))\}.$$

\noindent Then, 
\begin{equation*}
\ds\tilde{\Lambda}\cap\Pi(C_{{\underline{d}}_{n_{0}},B})=\bigcup_{j=\eta-n_0}^{\tau+n_0}{\Lambda}'_{j}.
\end{equation*}
Therefore, there is $j_{0}\in\{{j=\eta-n_0,\dots,\tau+n_0}\}$ such that ${\Lambda}'_{j_0}$ has non empty interior in $\tilde{\Lambda}\cap\Pi(C_{{\underline{d}}_{n_{0}},B})$, so 
\begin{equation}\label{Ajeitar2}
HD(\ds\tilde{\Lambda})=HD(\ds\tilde{\Lambda}\cap\Pi(C_{{\underline{d}}_{n_{0}},B})=HD({\Lambda}'_{j_0}).
\end{equation}
Thus, for $\und{x}\in \Pi^{-1}({\Lambda}'_{j_0})$ we have 
\begin{equation*}
\limsup_{n\to \infty}\tilde{f}(\sigma^{n}(A(\und{x})))=\ds \sup_{k}\tilde{f}(\sigma^{n_{k}({j_0})}(A_1(\und{x})))
\end{equation*}

\noindent  So, there is a subsequence $n_{k_{m}}({j_0})$ with $n_{k_{m}}({j_0})\to \infty$, as $m\to\infty$ such that 
$$\sup_{k}\tilde{f}(\sigma^{n_{k}({j_0})}(A_{1}(\und{x})))=\lim_{m\to \infty}\tilde{f}(\sigma^{n_{k_{m}}({j_0})}(A_{1}(\und{x}))).$$
By construction of $A_{1}$, it is true that 
$$\lim_{m\to \infty} \sigma^{n_{k_{m}}({j_0})}(A_{1}(\und{x}))=\sigma^{{j_0}}(A(\und{x})),$$
where $A(\und{x})$ is defined as before.\ \\
\ \\
Therefore, $$\limsup_{n \to \infty}\tilde{f}(\sigma^{n}(A_{1}(\und{x})))=\tilde{f}(\sigma^{{j_0}}(A(\und{x}))).$$\\
As an immediate consequence we have.
\begin{C}\label{C6S}
If $x\in {\Lambda}'_{j_0}$, then $$\limsup_{n\to \infty}f(\varphi^{n}(\tilde{A_{1}}(x)))=f(\varphi^{{j_0}}(\tilde{A}(x))), \ \ \text{where} \ \ \tilde{A_{1}}=\Pi\circ A_{1}\circ \Pi^{-1}.$$
\end{C}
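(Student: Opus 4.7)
The plan is to reduce the statement to the symbolic identity (\ref{E3S}), which has already been established just above, by transporting everything through the topological conjugacy $\Pi\colon\Sigma_{B}\to\Lambda$ with $\varphi\circ\Pi=\Pi\circ\sigma$. No new combinatorial work is needed; all the difficulty is absorbed into (\ref{E3S}).

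Fix $x\in\tilde{\Lambda}\cap U_{d}$ and set $\und{x}=\Pi^{-1}(x)$. Shrinking $U_{d}$ if necessary (as in Lemma \ref{L6S}), we may assume that $\und{x}\in C^{n_{0}}_{d_{n_{0},B}}$, so that the sequence $A_{1}(\und{x})$ is defined and admissible. Moreover, $x\in\tilde{\Lambda}$ means $\und{x}\in\Pi^{-1}(\tilde{\Lambda})$, so $\und{x}$ lies in the domain on which (\ref{E3S}) was verified. Iterating $\varphi\circ\Pi=\Pi\circ\sigma$ gives $\varphi^{n}\circ\Pi=\Pi\circ\sigma^{n}$ for every $n\in\mathbb{Z}$, and since $\tilde{A_{1}}=\Pi\circ A_{1}\circ\Pi^{-1}$ and $\tilde{f}=f\circ\Pi$ we compute
\begin{equation*}
f(\varphi^{n}(\tilde{A_{1}}(x)))=f\bigl(\varphi^{n}(\Pi(A_{1}(\und{x})))\bigr)=f\bigl(\Pi(\sigma^{n}(A_{1}(\und{x})))\bigr)=\tilde{f}(\sigma^{n}(A_{1}(\und{x}))).
\end{equation*}
Taking $\limsup_{n\to\infty}$ on both sides and applying (\ref{E3S}) yields
\begin{equation*}
\limsup_{n\to\infty}f(\varphi^{n}(\tilde{A_{1}}(x)))=\tilde{f}(A(\und{x}))=f(\Pi(A(\und{x})))=f(\tilde{A}(x)),
\end{equation*}
which is the asserted equality.

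I do not expect any serious obstacle here: the nontrivial step is (\ref{E3S}) itself, where one uses that the inserted block $\beta=\und{e}\,\alpha\,\und{f}$ is strictly longer than any connecting word $E_{i}$, so that the $\limsup$ of $\tilde{f}$ along the $\sigma$-orbit of $A_{1}(\und{x})$ is attained precisely when the central block $\alpha$ (whose zero position symbolically marks $x_{M}$) sits at the origin, giving the value $\tilde{f}(A(\und{x}))$. The only minor subtlety worth flagging is that, unlike $\tilde{A}$ in Lemma \ref{L6S}, the map $\tilde{A_{1}}$ need not extend to a local diffeomorphism, because the strings $E_{i}$ depend on $\und{x}$; however, the corollary is purely pointwise in $x$, so this does not enter the argument.
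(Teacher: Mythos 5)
Your argument is correct and is essentially the paper's own: the paper treats Corollary \ref{C6S} as an immediate consequence of (\ref{E3S}), obtained exactly as you do by conjugating with $\Pi$ (using $\varphi^{n}\circ\Pi=\Pi\circ\sigma^{n}$ and $\tilde{f}=f\circ\Pi$). Your closing remark that $\tilde{A_{1}}$ need not extend to a diffeomorphism, while the statement is purely pointwise, is a correct and harmless observation.
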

\noindent This Corollary implies that $\{f(\varphi^{{j_0}}(\tilde{A}(x)):x\in {\Lambda}'_{{j_0}}\}\subset L(f,\Lambda)$.\\


\section{The Image of the Product of Two  Regular Cantor Sets by a Real Function and the Behavior of the Spectra.}\label{IFR}

In this section we give a condition for the image of a horseshoe by a ``typical" real function to have nonempty interior.

\subsection{Intersections of Regular Cantor Sets}\label{SIRCS}

\ \\
\noindent Assume we are given two sets of data $(\A,\B,\Sigma,g)$, $({\A}',{\B}',\Sigma',g')$ defining regular Cantor sets $K$, $K'$. See Appendix \ref{RCS} for definitions and notations.

\noindent Let $r \in (1,+\infty]$. For $a \in \A$, denote by ${\cal{P}}^{r}(a)$ the space of $C^r$-embeddings of interval $I(a)$ into $\re$, endowed with the $C^r$ topology. The affine group $Aff(\re)$ acts by composition on the left on ${\cal{P}}^r(a)$, the quotient space being denoted by $\overline{\cal{P}}^r(a)$. We also consider ${\cal{P}}(a) = \ds\bigcup_{r>1} {\cal{P}}^r(a)$ and $\overline{\cal{P}}(a) = \ds\bigcup_{r>1} \overline{\cal{P}}^r(a)$, endowed with the inductive limit topologies.

\begin{R}\label{r=1}
In \cite{MY} is considered ${\cal{P}}^{r}(a)$ for $r \in (1,+\infty]$, but all the definitions and results involving ${\cal{P}}^{r}(a)$ can be obtained considering $r\in [1,+\infty]$.
\end{R}

\noindent Let $\mathcal{A} =(\und{\theta}, A)$, where $\und{\theta} \in \Sigma^-$ and $A$ is now an {\it affine\/} embedding of $I(\theta_0)$ into $\re$. We have a canonical map
\begin{eqnarray*}
\cal{A} & \to & {\cal{P}}^r = \bigcup_{\A} {\cal{P}}^r(a)\\
(\und{\theta},A) &\mapsto & A\circ k^{\und{\theta}} \ \ (\in {\cal{P}}^r(\theta_0)).
\end{eqnarray*}

\noindent We define as in the previous the spaces $\cal{P} = \ds\bigcup_{\A}{\cal{P}}$$(a)$ and ${\cal{P}}'= \ds\bigcup_{{\A}'} {\cal{P}}(a')$.

\noindent A pair $(h,h')$, $(h \in {\cal{P}}(a), h'\in {\cal{P}} '(a'))$ is called a {\it smooth configuration\/} for $K(a)=K\cap I(a)$, $K'(a')=K'\cap I(a')$. Actually, rather than working in the product $\cal{P} \times {\cal{P}}'$, it is better to go to the quotient $Q$ by the diagonal action of the affine group $Aff(\re)$. Elements of $Q$ are called {\it smooth relative configurations\/} for $K(a)$, $K'(a')$.

\noindent We say that a smooth configuration $(h,h') \in {\cal{P}}(a)\times {\cal{P}}(a')$ is
\begin{itemize}
\item {\it linked\/} if $h(I(a)) \cap h'(I(a')) \ne \emptyset$;
\item {\it intersecting\/} if $h(K(\und{a})) \cap h'(K(\und{a}')) \ne \emptyset$, where $K(\und{a})=K\cap I(\und{a})$ and $K(\und{a}')=K\cap I(\und{a}')$;
\item {\it stably intersecting\/} if it is still intersecting when we perturb it in $\cal{P}\times\cal{P}'$, and we perturb $(g,g')$ in $\Omega_\Sigma \times \Omega_{\Sigma'}$\,.
\end{itemize}

\noindent All these definitions are invariant under the action of the affine group, and therefore make sense for smooth relative configurations.\\

\noindent As in previous, we can introduce the spaces $\cal{A}$, ${\cal{A}}'$ associated to the limit geometries of $g$,\,\,$g'$ respectively. We denote by $\cal{C}$ the quotient of $\cal{A}\times{\cal{A}}'$ by the diagonal action on the left of the affine group. An element of $\cal{C}$, represented by $(\und\theta,A) \in \cal{A}$,\,\, $(\und{\theta}', A') \in {\cal{A}}'$, is called a relative configuration of the limit geometries determined by $\und{\theta}$, $\und{\theta}'$. We have canonical maps
\begin{eqnarray*}
\cal{A}\times{\cal{A}}'&\to & \cal{P}\times{\cal{P}}'\\
\cal{C} &\to & Q
\end{eqnarray*}
which allow to define linked, intersecting, and stably intersecting configurations at the level of $\cal{A}\times{\cal{A}}'$ or $\cal{C}$.\\

\noindent We consider the following subset $V$ of $\Omega_\Sigma \times \Omega_{\Sigma'}$\,. A pair $(g, g')$ belongs to $V$ if for any $[(\und{\theta},A), (\und{\theta}',A')] \in \cal{A} \times {\cal{A}}'$ there is a translation $R_t$ (in $\re$) such that $(R_t\circ A \circ k^{\und{\theta}}, A'\circ k^{\prime\und{\theta}'})$ is a stably intersecting configuration.\\

\noindent{\bf Theorem }[cf. \cite{MY}]:

\begin{enumerate}
\it{
	\item $V$ is open in $\Omega_\Sigma\times\Omega_{\Sigma'}$, and $V \cap (\Omega_\Sigma^\infty \times \Omega_{\Sigma'}^\infty)$ is dense (for the $C^\infty$-topology) in the set $\{(g,g'), HD(K) + HD(K') > 1\}$.

\item Let $(g,g') \in V$. There exists $d^* < 1$ such that for any $(h,h') \in \cal{P}\times{\cal{P}}'$, the set 
$$
{\cal{I}}_s = \{ t\in \re, (R_{t}\circ h, h') \text{ is a stably intersecting smooth configuration for } (g,g')\}
$$
is (open and) dense in
$$
{\cal{I}}= \{t\in\re, (R_t\circ h, h') \text{ is an intersecting smooth configuration for } (g,g')\}
$$
and moreover $HD(\cal{I}-$${\cal{I}}_s) \leq d^{*}$. The same $d^{*}$ is also valid for $(\tilde g, \tilde g')$ in a neighborhood of $(g,g')$ in $\Omega_{\Sigma} \times \Omega_{\Sigma'}$.
}
\end{enumerate}
Keeping the previous notation, we have the following theorem.
\begin{T}\label{TP1S}
Let $K$, $K'$ be two regular Cantor sets defined by expanding map $g$, $g'$. Suppose that $HD(K)+HD(K')>1$ and $(g,g')\in V$. Let $f$ be a $C^{1}$-function $f \colon U \to \re$ with $K\times K'\subset U\subset {\re}^{2}$ such that, in some point of $K\times K'$ its gradient is not parallel to any of the two coordinate axis, then $$int f(K\times K')\neq \emptyset.$$ 
\end{T}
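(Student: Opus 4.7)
Let $(p, p') \in K \times K'$ be the point at which $\nabla f$ is not parallel to either coordinate axis, so that both $\partial_x f(p, p')$ and $\partial_y f(p, p')$ are nonzero. Write $c^* = f(p, p')$. By the implicit function theorem, on a small neighborhood of $(p, p')$ and for $c$ close to $c^*$, the level set $\{f = c\}$ is the $C^1$ graph $y = \phi_c(x)$, with $\phi_{c^*}(p) = p'$ and $\partial_c \phi_c = 1/\partial_y f \neq 0$. Thus $c \in f(K \times K')$ iff $\phi_c(K) \cap K' \neq \emptyset$ inside these neighborhoods, and the goal is to exhibit an open interval of $c$'s for which this holds.

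To apply the hypothesis $(g, g') \in V$, I zoom in using the symbolic dynamics. Let $\und{\theta} \in \Sigma^-$ and $\und{\theta}' \in {\Sigma'}^-$ encode the symbolic pasts of $p$ and $p'$, and let $B_n, B_n'$ be the affine rescalings of the Markov cylinders $I(\und{\theta}^n) \ni p$ and $I(\und{\theta}'^n) \ni p'$ onto $I(\theta_0), I(\theta_0')$. Write $\delta_n, \delta_n'$ for their lengths, and put $k_n^{\und{\theta}} := B_n \circ f_{\und{\theta}^n}$, $k_n^{\prime \und{\theta}'} := B_n' \circ f_{\und{\theta}'^n}$. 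By the Sullivan-type Proposition quoted in the paper, $k_n^{\und{\theta}} \to k^{\und{\theta}}$ and $k_n^{\prime \und{\theta}'} \to k^{\prime \und{\theta}'}$ in $C^1$. A first-order Taylor expansion of $f$ at $(p, p')$, combined with the fact that $B_n, B_n'$ are affine, yields that the rescaled map $\hat{\phi}_{c, n} := B_n' \circ \phi_c \circ B_n^{-1} \colon I(\theta_0) \to \re$ is $C^1$-close, uniformly for $c$ in a neighborhood of $c^*$, to the affine map
\[
u \;\mapsto\; \sigma_n\, u + T_n(c),
\]
with $\sigma_n$ proportional to $-(\partial_x f/\partial_y f)(p, p') \cdot \delta_n/\delta_n'$ and $T_n(c)$ affine in $c$ with $\partial_c T_n \neq 0$; the $C^1$-error is of order $\delta_n + \delta_n' \to 0$. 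Bounded distortion keeps $\delta_n/\delta_n'$ inside a compact subset of $(0, \infty)$, so after passing to a subsequence I may assume $\sigma_n \to \sigma_\infty \neq 0$.

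Apply now the hypothesis $(g, g') \in V$ to the configuration $[(\und{\theta}, A_\infty), (\und{\theta}', \mathrm{Id})] \in \mathcal{A} \times \mathcal{A}'$ with $A_\infty(u) = \sigma_\infty u$: there is a translation $t^* \in \re$ such that $(R_{t^*} \circ A_\infty \circ k^{\und{\theta}}, k^{\prime \und{\theta}'})$ is a stably intersecting configuration in $\mathcal{P}(\theta_0) \times \mathcal{P}'(\theta_0')$. By openness of stable intersection, there exist an open interval $J \ni t^*$ and a $C^1$-neighborhood $W$ of $(R_{t^*} \circ A_\infty \circ k^{\und{\theta}}, k^{\prime \und{\theta}'})$ such that every configuration in $W$ is intersecting, and $(R_t \circ A_\infty \circ k^{\und{\theta}}, k^{\prime \und{\theta}'}) \in W$ for all $t \in J$. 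Fix $n$ large enough that for every $c \in T_n^{-1}(J)$---an open interval of length $\asymp \delta_n'\, |J|$---the actual rescaled configuration $(\hat{\phi}_{c, n} \circ k_n^{\und{\theta}}, k_n^{\prime \und{\theta}'})$ lies inside $W$; this is achievable by the $C^1$-closeness above. Hence this configuration is intersecting for all such $c$; unrescaling via $B_n, B_n'$ gives $\phi_c(K \cap I(\und{\theta}^n)) \cap (K' \cap I(\und{\theta}'^n)) \neq \emptyset$, i.e.\ $c \in f(K \times K')$, for all $c$ in the open interval $T_n^{-1}(J)$, proving $\mathrm{int}\, f(K \times K') \neq \emptyset$.

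The main obstacle is making the $C^1$-approximation quantitative enough: ensuring that for $n$ large the rescaled configuration lies inside the neighborhood $W$ on which stable intersection persists. This relies on the $C^1$-regularity of the horseshoe's invariant foliations---available because $\varphi$ is $C^2$---and on the uniform bounds from Sullivan's Proposition. A secondary subtlety is that $\sigma_\infty$ depends on the chosen subsequence; this is harmless because the hypothesis $V$ is formulated uniformly over all affine configuration data $A, A'$.
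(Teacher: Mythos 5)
Your proposal follows essentially the same strategy as the paper's proof: locate a point of $K\times K'$ where $\nabla f$ is transverse to both axes, rescale the level curves of $f$ so that they become $C^{1}$-close to affine maps between limit geometries, invoke the property $V$ to obtain a translation giving a stably intersecting configuration, and then sweep the level value over an open interval using the openness of stable intersection. The paper's cosmetic differences are that it works with a pair of \emph{periodic} points $p_{1},p_{2}$ (so the limit geometries are those of periodic addresses $\overline{\und{a}}_{1},\overline{\und{a}}_{2}$ and the renormalizations $k^{\overline{\und{a}}_{1}}\circ g^{m_k}$ are almost homotheties), and that it parametrizes the family of level curves by a vertical offset $s$ rather than by the value $c$ directly; neither difference is essential.

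One step of yours is wrong as written: bounded distortion does \emph{not} keep $\delta_{n}/\delta_{n}'$ inside a compact subset of $(0,\infty)$ when the same combinatorial depth $n$ is used for both Cantor sets. If the expansion rates of $g$ and $g'$ differ, this ratio tends to $0$ or $\infty$, so $\sigma_{n}$ degenerates and the limiting configuration is not an affine embedding, which breaks the application of $V$. This is exactly why the paper chooses two \emph{independent} sequences of depths $(m_{k})$ and $(n_{k})$ with $|I_{\und{a}_{1}^{m_{k}}}|/|I_{\und{a}_{2}^{n_{k}}}|\in (C^{-1},C)$ before passing to a convergent subsequence. With that one correction (compare cylinders of comparable metric size rather than equal symbolic depth), your argument goes through and coincides in substance with the paper's.
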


\begin{proof}[\bf{Proof}]

By hypothesis, and by continuity of $df$, we find  a pair of periodic points $p_{1}$, $p_{2}$ of $K$ and $K'$, respectively, with addresses ${\overline{\und{a}}}_{1}=\und{a_{1}a_{1}a_{1}}...$ and $\overline{\und{a}}_{2}=\und{a_{2}a_{2}a_{2}}...$, where $\und{a}_{1}$ and $\und{a}_{2}$ are finite sequences, such that $df(p_{1},p_{2})$ is not a real multiple of $dx$ nor of $dy$. There are increasing sequences of natural number $(m_k)$, $(n_k)$ such that the intervals $I_{\und{a}^{m_k}_1}$ and $I'_{\und{a}^{n_k}_2}$ defined by the finite words ${\und{a}^{m_k}_1}$ and ${\und{a}^{n_k}_2}$, satisfy 
$$\frac{|I_{\und{a}^{m_k}_1}|}{|I'_{\und{a}^{n_k}_2}|}\in (C^{-1},C) \ \text{for some} \ C>1.$$
Thus, we can assume that $\frac{|I_{\und{a}^{m_k}_1}|}{|I'_{\und{a}^{n_k}_2}|}\rightarrow \lambda \in [C^{-1},C]$ as $k\to \infty$, define $\ds\tilde{\lambda}:=-\frac{\frac{\partial f}{\partial x}(p_1,p_2)}{\frac{\partial f}{\partial y}(p_1,p_2)}\lambda$.\newline 
 As $(K,K')\in V$, then there is $t\in \re$ such that $(\tilde{\lambda}k^{{\overline{\und{a}}}_{1}}+t,k'^{{\overline{\und{a}}}_{2}})$ is a stably intersecting configuration. So, there are $\tilde{x}\in I(({\underline{a}_1})_0)$ and $\tilde{y}\in I((\underline{a}_2)_{0})$ such that $x_0=k^{{\overline{\und{a}}}_{1}}(\tilde{x})$ and $y_0=k^{{\overline{\und{a}}}_{2}}(\tilde{y})$ with $\tilde{\lambda}x_0+t=y_0$, where $({\und{a}_i})_0$  is the zero position of the finite word ${\und{a}_i}$, for $i=1, 2$. Moreover, $\tilde{x}=g^{m_k|\und{a}_1|-1}(\bar{x})$ and $\tilde{y}=(g')^{n_k|\und{a}_{2}|-1}(\bar{y})$, for some $\bar{x}\in I_{\und{a}^{m_k}_1}$ and $\bar{y}\in I'_{\und{a}^{n_k}_2}$.
\ \\
Taking $k$ large enough, we can assume that  $df(\bar{x},\bar{y})$ is not a real multiple of $dx$ nor of $dy$. In particular $\frac{\partial f}{\partial y}(\bar{x},\bar{y})\neq 0$, then by the local submersion theorem, there exists a $C^{1}$-diffeomorphism $H(x,y)=(x,g(x,y))$ defined in neighborhood of $(\bar{x},\bar{y})$ such that $f(H(x,y))=y$. Without less of generality, we can suppose that $H$ is defined in $I_{\und{a}^{m_k}_1}\times I'_{\und{a}^{n_k}_2}$.\\
Put $g_{s}(x):=g(x,s)$; if $s_0$ is such that $f(\bar{x},\bar{y})=s_0$, then $g_{s_0}(\bar{x})=\bar{y}$.
Also, observe that $s\in f((K\cap I_{\und{a}^{m_k}_1})\times (K'\cap I'_{\und{a}^{n_k}_2}))$ is equivalent to $g_{s}(K\cap I_{\und{a}^{m_k}_1})\cap  (K'\cap I'_{\und{a}^{n_k}_2})\neq \emptyset$.\\
Thus, our problem reduces to prove that $g_{s}(K\cap I_{\und{a}^{m_k}_1})$ and $K'\cap I'_{\und{a}^{n_k}_2}$ have non-empty intersection, for $s$ close to $s_0=f(\bar{x},\bar{y})$.\\
Denote by $B_k\colon I'_{\und{a}^{n_k}_2}\to [0,1]$ and $T_{k}\colon I_{\und{a}^{m_k}_1}\to [0,1]$ the orientation-preserving affine maps given by $B_{k}(x)=\frac{1}{b'_k-a'_k}(x-a'_k)=\dfrac{1}{|I'_{\und{a}^{n_k}_2}|}(x-a'_k)$ and $T_k(x)=\frac{1}{b_k-a_k}(x-a_k)=\dfrac{1}{|I_{\und{a}^{m_k}_1}|}(x-a_k)$, where $I_{\und{a}^{m_k}_1}=[a_k,b_k]$ and $I'_{\und{a}^{n_k}_2}=[a'_k,b'_k].$\\
Then, by definition of limit geometries (cf. subsection \ref{RCS}) we have that $B_{k}(K'\cap I'_{\und{a}^{n_k}_2})$ converges to $k^{{\overline{\und{a}}}_2}(K')$ and $T_{k}(K\cap I_{\und{a}^{m_k}_1})$ converges to $k^{{\overline{\und{a}}}_1}(K)$ as regular Cantor sets.\\
Also, $B_k(g_{s_0}(K\cap I_{\und{a}^{m_k}_1}))=B_k\circ g_{s_0}\circ T_k^{-1}(T_k(K\cap I_{\und{a}^{m_k}_1}))$.\\
\ \\
\noindent \textit{Claim}: The map $B_k\circ g_{s_0}\circ T_k^{-1}$ converges to $\tilde{\lambda}x+t$ in the $C^{1}$ topology.\\
In fact:\\
We call $\epsilon_{k}=b_k-a_k=|I_{\und{a}^{m_k}_1}|$ and $\epsilon'_k=b'_{k}-a'_{k}=|I'_{\und{a}^{n_k}_2}|$
\begin{eqnarray}\label{ET2}
B_{k}\circ g_{s_{0}}\circ T_{k}^{-1}(x)&=&\frac{1}{\epsilon'_k}(g_s(\epsilon_{k}x+a_k)-a'_k) \nonumber\\ 
&=&\frac{1}{\epsilon_k'}\left(  g_{s_{0}}(a_k)+g'_{s_{0}}(a_k)\epsilon_{k}x+r(\epsilon_{k}x)-a'_{k}\right) \nonumber \\
&=& B_{k}(g_{s_0}(a_k))+g'_{s_{0}}(a_k)\frac{\epsilon_k}{\epsilon'_k}x+\frac{\epsilon_k}{\epsilon'_{k}}\frac{r(\epsilon_{k}x)}{\epsilon_k}.
\end{eqnarray}
Since $g_{s_0}(\bar{x})=\bar{y}$, then $B_{k}(g_{s_0}(\bar{x}))=B_{k}(\bar{y})=B_{k}\circ (g')^{-(n_k|\und{a}_2|-1)}(\tilde{y})$ and the definition of limit geometries implies that $B_{k}(g_{s_0}(\bar{x}))$ converges to $k^{{\overline{\und{a}}}_{2}}(\tilde{y})=y_0=\tilde{\lambda}x_0+t$, also $T_{k}(\bar{x})=T_k\circ g^{-(m_k|\und{a}_1|-1)}(\tilde{x})$ converges to $k^{{\overline{\und{a}}}_{1}}(\tilde{x})=x_0$. Therefore by equation (\ref{ET2})
$$B_{k}\circ g_{s_0}(\bar{x})=B_{k}\circ g_{s_0}\circ T_{k}^{-1}(T_{k}(\bar{x}))=B_{k}(g_{s_0}(a_k))+g'_{s_{0}}(a_k)\frac{\epsilon_k}{\epsilon'_k}T_{k}(\bar{x})+\frac{\epsilon_k}{\epsilon'_{k}}\frac{r(\epsilon_{k}T_{k}(\bar{x}))}{\epsilon_k}.$$
So, if $k\to +\infty$, then the left side of the equality above converges to  $\tilde{\lambda}x_0+t$, and since $g'_{s_{0}}(a_k)\to -\frac{\frac{\partial f}{\partial x}(p_1,p_2)}{\frac{\partial f}{\partial y}(p_1,p_2)}$, $\frac{\epsilon_k}{\epsilon'_{k}}\to \lambda$, $T_{k}(\bar{x})\to x_{0}$ and $\frac{r(\epsilon_{k}T_{k}(\bar{x}))}{\epsilon_k}\to 0$, then by definition of $\tilde{\lambda}$ and  the equality above we have that 
\begin{equation}\label{ET2,2}
B_{k}\circ g_{s_0}(a_k)\to \tilde{\lambda}x_0+t-\tilde{\lambda}x_0=t.
\end{equation}
Thus, by the equalities (\ref{ET2}) and (\ref{ET2,2}) we have 
$$\lim_{k\to +\infty}B_{k}\circ g_{s_{0}}\circ T_{k}^{-1}(x)=\tilde{\lambda}x+t$$
Moreover, since $g_{s_{0}}$ is a $C^{1}$-function, then
$$\left( B_{k}\circ g_{s_{0}}\circ T_{k}^{-1}\right)'(x)=\frac{1}{\epsilon_{k}'}g'_{s_{0}}(T_{k}^{-1}(x)).\epsilon_k \to  -\frac{\frac{\partial f}{\partial x}(p_1,p_2)}{\frac{\partial f}{\partial y}(p_1,p_2)}\lambda=\tilde{\lambda}.$$
This concludes the proof the claim.\\
\ \\
Therefore, $$B_k(g_{s_0}(K\cap I_{\und{a}^{m_k}_1}))=B_k\circ g_{s_0}\circ T_k^{-1}(T_k(K\cap I_{\und{a}^{m_k}_1}))\to \tilde{\lambda}k^{{\overline{\und{a}}}_1}(K)+t,$$ and 
$$B_{k}(K'\cap I'_{\und{a}^{n_k}_2}) \to k^{{\overline{\und{a}}}_2}(K').$$
Since $(\tilde{\lambda}k^{{\overline{\und{a}}}_{1}}+t,k^{{\overline{\und{a}}}_{2}})$ is a stably intersecting configuration,  and this property is open (cf. Remark \ref{r=1}) and $g_s(\cdot)$ is $C^{1}$-close to $g_{s_0}(\cdot)$ for $s$ close to $s_0$, then for $k$ large enough we have that the Cantor set 
$B_k(g_{s}(K\cap I_{\und{a}^{m_k}_1}))$ and $B_{k}(K'\cap I'_{\und{a}^{n_k}_2})$  have non-empty intersection, therefore 
$g_{s}(K\cap I_{\und{a}^{m_k}_1})$ and $K'\cap I'_{\und{a}^{n_k}_2}$ have non-empty intersection.
\end{proof}
\ \\
\noindent The following example shows that the property $V$ in the Theorem \ref{TP1S} is fundamental.\\
\ \\
\noindent {\bf Example:}
Consider the regular Cantor set $K_{\alpha}:=\bigcap_{n\geq 0}\psi^{-n}(I_1\cup I_2)$, where 

\begin{equation*}
 \psi(x) = \left\{ \begin{array}{lll}
         \ \ \ \frac{2}{1-\alpha}x & \mbox{\text{if} \ $x\in I_1:=[0,\frac{1-\alpha}{2}]$};\\
           & \\
       -\frac{2}{1-\alpha}x + \frac{2}{1-\alpha}& \mbox{\text{if} \ $x\in I_2:=[\frac{1+\alpha}{2},1]$}.
\end{array} \right.
\end{equation*}

\noindent Since, $HD(K_{\alpha})=-\frac{\log2}{\log(\frac{1-\alpha}{2})}$ (cf. \cite{PT}). Then, if $\alpha<1/2$, then $HD(K_{\alpha})>1/2$ and for $1/3<\alpha<1/2$ hold that $K_{\alpha}-K_{\alpha}$ has measure zero (cf. \cite{CCDA}).\\
\ \\
Moreover, $HD(K_{\alpha}\times K_{\alpha})>1$ and $f(x,y)=x-y$, satisfies the hypothesis of previous Theorem, but for $1/3<\alpha<1/2$ we have that $$int\ f(K_{\alpha}\times K_{\alpha})=\emptyset.$$

\begin{C}\label{C7S}
Let $\varphi$ be a $C^{2}$-diffeomorphism, and $\Lambda$ a horseshoe associated to $\varphi$, suppose that $K^{s}$, $K^{u}$ satisfy the hypotheses of the theorem above, put 
$$\mathcal{A}_{\Lambda}=\{f\in C^{1}(M,\mathbb{R}):\exists z=(z^{s},z^{u})\in \Lambda \ \ \text{such that} \ \  Df(z).e_{z}^{s,u} \ne 0 \ \}.$$
Then, for all $f\in \mathcal{A}_{\Lambda}$, we have $int f(\Lambda)\neq \emptyset$.
\end{C}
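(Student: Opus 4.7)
The plan is to reduce Corollary \ref{C7S} to Theorem \ref{TP1S} by passing to local coordinates in which the stable and unstable foliations of $\Lambda$ are straightened. I would begin by picking $z\in\Lambda$ with $\nabla f(z)\nparallel e^{s,u}_{z}$, which exists by the hypothesis $f\in\mathcal{A}_{\Lambda}$. Since $\varphi$ is $C^{2}$, the foliations $\mathcal{F}^{s}(\Lambda)$ and $\mathcal{F}^{u}(\Lambda)$ extend to $C^{1}$ foliations on a full neighborhood of $\Lambda$ (as recalled in the preliminaries), so there is a $C^{1}$-diffeomorphism $\Psi\colon U\to W\subset\mathbb{R}^{2}$, defined on a small neighborhood $U$ of $z$, sending stable leaves to horizontal lines and unstable leaves to vertical lines. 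Choosing $U$ so that it contains a Markov rectangle $R\ni z$, the local product structure of $\Lambda$ will become an honest Cartesian product in the new coordinates: $\Psi(\Lambda\cap R)=K^{s}_{\mathrm{loc}}\times K^{u}_{\mathrm{loc}}$, where $K^{s}_{\mathrm{loc}},K^{u}_{\mathrm{loc}}$ are local pieces of the stable and unstable Cantor sets of $\Lambda$.

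Next, I transport the data through $\Psi$ and set $\tilde{f}=f\circ\Psi^{-1}\colon W\to\mathbb{R}$. Because $\Psi$ carries $e^{s,u}_{z}$ to vectors along the coordinate axes, the assumption on $\nabla f(z)$ translates into $\nabla\tilde{f}(\Psi(z))$ not being parallel to either coordinate axis, which is exactly the gradient hypothesis of Theorem \ref{TP1S}. The pieces $K^{s}_{\mathrm{loc}},K^{u}_{\mathrm{loc}}$ inherit their expanding dynamics from suitable iterates of $g^{s},g^{u}$ restricted to $R$; in particular $HD(K^{s}_{\mathrm{loc}})+HD(K^{u}_{\mathrm{loc}})>1$ and the corresponding pair of expanding maps still lies in $V$. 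Theorem \ref{TP1S} then applies to $\tilde{f}$ and $K^{s}_{\mathrm{loc}}\times K^{u}_{\mathrm{loc}}$ and yields $int\,\tilde{f}(K^{s}_{\mathrm{loc}}\times K^{u}_{\mathrm{loc}})\neq\emptyset$. Since $\tilde{f}(K^{s}_{\mathrm{loc}}\times K^{u}_{\mathrm{loc}})=f(\Lambda\cap R)\subset f(\Lambda)$, this proves the corollary.

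The main subtlety will be ensuring that $\Psi$ is genuinely $C^{1}$ as a map of both variables, not merely $C^{0}$ with $C^{1}$ restrictions to leaves, so that $\tilde{f}$ is honestly $C^{1}$ and so that the image of $\Lambda\cap R$ is exactly a product of two Cantor sets whose expanding dynamics can be identified with sub-iterates of $(g^{s},g^{u})$. This is precisely where the $C^{1}$-extension of the invariant foliations, which requires $\varphi\in C^{2}$, is essential. A secondary point to verify, once the straightening is in place, is that the stable-intersection property $(g^{s},g^{u})\in V$ survives the passage to the expanding maps on the local pieces $K^{s}_{\mathrm{loc}},K^{u}_{\mathrm{loc}}$; this will follow from the openness of $V$ together with the fact that restricting to a subrectangle corresponds to an affine renormalization of the limit geometries, under which membership in $V$ is preserved.
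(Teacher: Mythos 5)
Your proposal is correct and follows exactly the route the paper intends: the paper states Corollary \ref{C7S} without proof, as an immediate consequence of Theorem \ref{TP1S} combined with the fact (recalled in the appendix and preliminaries) that for a $C^{2}$-diffeomorphism the invariant foliations extend to $C^{1}$ foliations, so that $\Lambda$ is locally a $C^{1}$-diffeomorphic copy of $K^{s}\times K^{u}$ and the gradient condition transfers to the product coordinates. Your two flagged subtleties (genuine $C^{1}$-regularity of the straightening chart, and persistence of the condition $V$ under restriction to subrectangles via affine renormalization of limit geometries) are precisely the points the paper relies on implicitly.
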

\noindent It is easy to prove that $\mathcal{A}_{\Lambda}$ given the above Corollary is an open and dense set in $C^{1}(M,\re)$.

\section{The Main Theorem}\label{MT}
\noindent A fundamental result due to Moreira-Yoccoz in \cite{MY1} on the existence of elements in $V$ associated to the pairs of regular Cantor sets $(K^{s},K^{u})$ defined by $g^{s}, \ g^{u}$ where $g^{s}$ describes the geometry transverse of the unstable foliation $W^{u}(\Lambda,R)$ and $g^{u}$ describes the geometry transverse of the stable foliation $W^{s}(\Lambda,R)$ given in the section \ref{sec EMAH} is the following:\\

\noindent {\bf Theorem}[cf. \cite{MY1}\label{MY1S}]
\textit{Suppose that the sum of the Hausdorff dimensions of the regular Cantor set $K^{s}, K^{u}$ defined by $g^{s},g^{u}$ is $> 1$. If the neighborhood $\mathcal{U}$ of $\varphi_{0}$ in $Diff^{\infty}(M)$ is sufficiently small, there is an open and dense $\cal{U}^{*}\subset \cal{U}$ such that for $\varphi \in \cal{U}^{*}$
the corresponding pair of expanding applications $(g, g^{\prime})$ belongs to $V$.}\\

\noindent We use the above results to show that the Markov and Lagrange spectra have typically non-empty interior in this context.\\ 

\noindent Remember that given a horseshoe $\Lambda$ associated to a diffeomorphism $\varphi$, then for $f\in H_{\varphi}$, we defined the subhorseshoe $\tilde{\Lambda}$ in section \ref{MLDSS} as $\tilde{\Lambda}:=\bigcap_{n\in \mathbb{Z}}\varphi^{n}(\Lambda\setminus R_{\und{q}_{\tilde{s}}})$.\\
The following lemma shows that the $HD(\tilde{\Lambda})$ does not change much compared to $HD(\Lambda)$. More precisely.

\begin{Le}\label{LS7}
If $\Lambda$ is a horseshoe associated to a $C^{2}$-diffeomorphism $\varphi$ and $HD(\Lambda)>1$, then $\ds HD(\tilde{\Lambda})>1$ provided $\tilde s$ is large enough.
\end{Le}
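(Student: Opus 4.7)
The plan is to combine the Manning--McCluskey dimension formula for surface horseshoes with a continuity argument for topological pressure under passage to subshifts obtained by forbidding longer and longer windows.

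First I would translate to symbolic dynamics: via the conjugacy $\Pi$, the set $\tilde\Lambda$ corresponds to the subshift
$\tilde\Sigma_s\subset\Sigma_B$ consisting of bi-infinite sequences that nowhere contain the block $(q_{-s},\dots,q_s)$. This $\tilde\Sigma_s$ is a sofic shift, recodable as a genuine SFT on a higher-block alphabet, and it contains a topologically mixing SFT that corresponds to an honest subhorseshoe $\hat\Lambda_s\subset\tilde\Lambda$. For such subhorseshoes the Manning--McCluskey formula gives $HD(\Lambda)=d_s+d_u$ and $HD(\hat\Lambda_s)=\hat d_s(s)+\hat d_u(s)$, where $d_u$ and $\hat d_u(s)$ are the unique solutions of the Bowen equations $P_{\Sigma_B}(-d_u\log|Dg^u|)=0$ and $P_{\hat\Sigma_s}(-\hat d_u(s)\log|Dg^u|)=0$, and analogously for the stable dimensions.

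The key step is to show that $\hat d_s(s)+\hat d_u(s)\to d_s+d_u$ as $s\to\infty$. For this I would use continuity of topological pressure: for any H\"older potential $\phi$ on $\Sigma_B$, the restricted pressure $P_{\hat\Sigma_s}(\phi)$ converges to $P_{\Sigma_B}(\phi)$ as $s\to\infty$. The proof is a counting estimate: any $\Sigma_B$-admissible word of length $n$ can be decomposed at its first occurrence of the forbidden window $(q_{-s},\dots,q_s)$ into blocks that are $\hat\Sigma_s$-admissible, and the thermodynamic cost of these decompositions decays with $s$, so that the exponential growth rate of $\phi$-weighted $\hat\Sigma_s$-words of length $n$ tends to that of $\Sigma_B$. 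Combined with the implicit function theorem applied to Bowen's equation (whose solution depends smoothly on the potential and on the subshift through its transfer operator), this yields $\hat d_u(s)\to d_u$ and $\hat d_s(s)\to d_s$. Since $d_s+d_u=HD(\Lambda)>1$, for all sufficiently large $s$ we obtain $HD(\tilde\Lambda)\geq HD(\hat\Lambda_s)=\hat d_s(s)+\hat d_u(s)>1$, as required.

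The main obstacle is the quantitative continuity of pressure under the forbidden-window restriction, which must be made uniform in the potential $-d\log|Dg^{s,u}|$ for $d$ in a neighborhood of $(d_s,d_u)$. All the other ingredients---Manning--McCluskey, Bowen's formula, and smooth dependence of its solution on the underlying SFT---are classical in the thermodynamic formalism for $C^2$ hyperbolic surface diffeomorphisms, so the real work is in packaging the combinatorial counting estimate above into a pressure bound that is uniform on a small interval of exponents.
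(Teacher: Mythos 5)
Your strategy is sound in outline, but it takes a genuinely different route from the paper. The paper never invokes thermodynamic formalism: it reduces everything to one dimension. Since $\tilde{\Lambda}$ locally contains the product of the two one-dimensional Cantor sets obtained from $K^{s}$ and $K^{u}$ by deleting the sub-words of $\underline{q}$ corresponding to the stable and unstable directions, it suffices to show that deleting a single cylinder $I_{\underline{b}}$ of large depth $m$ from a regular Cantor set $K$ drops its Hausdorff dimension by at most $\epsilon(m)\to 0$ (Lemma~8 of the paper). That one-dimensional estimate is proved by elementary counting: one compares the Palis--Takens exponents $\alpha_{n},\beta_{n}$ (defined by $\sum_{R}\Lambda_{n,R}^{-\alpha_{n}}=C$ and $\sum_{R}\lambda_{n,R}^{-\beta_{n}}=1$, which squeeze $HD(K)$) for the full partition and for the partition with $I_{\underline{b}}$ removed, and finds a loss of order $\log\bigl(k^{m}/(k^{m}-1)\bigr)/(m\log\lambda)$. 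Your pressure-continuity argument proves the same convergence $\hat d_{s}(s)+\hat d_{u}(s)\to d_{s}+d_{u}$ by heavier but more standard machinery; it is arguably more robust (the paper's proof leans on several unquantified ``$\approx$'' steps), while the paper's is self-contained and gives an explicit rate.

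One step of your argument needs more care than you give it: you need the large stable and unstable Bowen roots to be realized on the \emph{same} transitive component. The forbidden-window subshift $\tilde{\Sigma}_{s}$ is in general reducible, and the pressure of a reducible SFT is the maximum of the pressures of its irreducible components; a priori the component maximizing $P(-d\log|Dg^{u}|)$ need not be the one maximizing the stable pressure, in which case Manning--McCluskey applied to a single mixing subhorseshoe $\hat{\Lambda}_{s}$ does not directly yield $\hat d_{s}(s)+\hat d_{u}(s)$ close to $d_{s}+d_{u}$. This is fixable (for instance by constructing a single mixing sub-SFT of $\tilde{\Sigma}_{s}$ whose pressure converges for every fixed H\"older potential), but it must be addressed. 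The paper avoids the issue entirely because its lower bound on $HD(\tilde{\Lambda})$ comes from the local product of two one-dimensional Cantor sets and from covering/mass-distribution estimates that never require the restricted sets to be dynamically irreducible.
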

\ \\
Assuming the Lemma \ref{LS7}, then the equations (\ref{Ajeitar1}) and (\ref{Ajeitar2})  implies that:
\begin{C}\label{CLS7}
The sets $\tilde{\Lambda}_{i_0}$ and $\Lambda'_{j_0}$ satisfies $HD(\tilde{\Lambda}_{i_0}), \ HD(\Lambda'_{j_0})>1$.
\end{C}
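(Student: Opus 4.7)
The strategy is to use the product structure of Hausdorff dimension for surface horseshoes together with continuity of topological pressure under the restriction to a slightly smaller subshift of finite type. By the theorem of Manning--McCluskey (or Palis--Takens), for any horseshoe $\Lambda$ on a surface arising from a $C^2$ diffeomorphism, one has $HD(\Lambda) = HD(K^s) + HD(K^u)$, where $K^s, K^u$ are the stable/unstable regular Cantor sets describing the transverse geometries of the invariant foliations.

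First, I would identify $\tilde{\Lambda}$ symbolically: since $U \cap \Lambda = \Lambda \setminus R_{\und{q}}$ and $R_{\und{q}}$ is the cylinder determined by $\und{q}=(q_{-s},\dots,q_s)$, the set $\tilde{\Lambda}=\bigcap_{n \in \mathbb{Z}} \varphi^n(U)$ is conjugate via $\Pi$ to the subshift $\Sigma_{\tilde{B}} \subset \Sigma_B$ of bi-infinite admissible sequences that do not contain $\und{q}$ as a subword. Being the maximal invariant subset of an open neighborhood, $\tilde{\Lambda}$ is itself a hyperbolic set with local product structure, so Manning--McCluskey applies again and yields $HD(\tilde{\Lambda}) = HD(\tilde{K}^s) + HD(\tilde{K}^u)$, where $\tilde{K}^{s,u}$ are the Cantor sets obtained from $K^{s,u}$ by restricting the symbolic dynamics to $\Sigma_{\tilde{B}}$. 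Then I would invoke Bowen's formula: $HD(K^u)$ (resp.\ $HD(\tilde{K}^u)$) is the unique zero in $d$ of the pressure $P_{\Sigma_B}(-d\log |Dg^u|)$ (resp.\ $P_{\Sigma_{\tilde B}}(-d\log |Dg^u|)$), and similarly for the stable side.

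The core of the argument is therefore to show that as $s \to \infty$ one has $P_{\Sigma_{\tilde{B}}}(\phi) \to P_{\Sigma_B}(\phi)$ for the relevant H\"older potentials $\phi = -d \log|Dg^{s,u}|$; the monotonicity of the zero of the pressure in its parameter then forces $HD(\tilde{K}^{s,u}) \to HD(K^{s,u})$. This convergence is a standard consequence of the mixing of the subshift of finite type $\Sigma_B$ together with the thermodynamic formalism: the variational principle lets one approximate any equilibrium state for $\phi$ on $\Sigma_B$ by invariant measures supported on $\Sigma_{\tilde{B}}$ (since equilibrium states are Gibbs, they give cylinders of length $2s+1$ exponentially small mass, and mixing allows one to ``re-route'' orbits around $\und{q}$), so the variational pressures converge. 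Since $HD(\Lambda) = HD(K^s) + HD(K^u) > 1$, choosing $s$ large enough gives $HD(\tilde{\Lambda}) = HD(\tilde K^s) + HD(\tilde K^u) > 1$.

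The main obstacle is the quantitative pressure continuity step: one has to justify that removing a single (arbitrarily small, but fixed once chosen) Markov cylinder from a mixing subshift of finite type produces only an arbitrarily small drop in pressure. This is well-known for equilibrium states of H\"older potentials on mixing shifts of finite type via Ruelle--Bowen thermodynamic formalism, but it requires some care to make sure the potentials $-d\log|Dg^{s,u}|$ are indeed H\"older (which they are because $\varphi$ is $C^2$ and the foliations extend to $C^1$ foliations of a neighborhood of $\Lambda$) and that $\Sigma_{\tilde B}$ is still mixing (or at least contains a mixing component of full pressure) for $s$ sufficiently large, which follows from transitivity of $\Lambda$ and the fact that $\und{q}$ is a single word.
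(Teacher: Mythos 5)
Your proposal proves the wrong statement. What you establish (modulo the pressure-continuity details) is that $HD(\tilde{\Lambda})>1$, which is the content of Lemma \ref{LS7}, not of Corollary \ref{CLS7}. The corollary is about the specific subset $\tilde{\Lambda}_{i_0}$, which is one piece of the finite decomposition
$\tilde{\Lambda}\cap\Pi(C^{n_{0}}_{d_{n_{0},B}})=\bigcup_{i=1}^{l}\tilde{\Lambda}_{i}$
introduced in Section \ref{MLDSS} for the case $x_M\in Per(\varphi)$: each $\tilde{\Lambda}_i$ consists of the points $x$ for which the supremum $\sup_{n\in\mathbb{Z}}\tilde{f}(\sigma^{n}(A(\und{x})))$ is attained at the fixed position $p_i$. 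Your argument never mentions this set, so it cannot conclude anything about $HD(\tilde{\Lambda}_{i_0})$.

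The two ingredients you are missing are exactly what the paper uses. First, one must pass from $HD(\tilde{\Lambda})>1$ to $HD(\tilde{\Lambda}\cap\Pi(C^{n_{0}}_{d_{n_{0},B}}))>1$; this uses the local product structure of $\tilde{\Lambda}$ (the intersection with a cylinder is locally a product of pieces of the same stable and unstable Cantor sets, which by bounded distortion have the same Hausdorff dimensions as the full ones). Second, since $\tilde{\Lambda}\cap\Pi(C^{n_{0}}_{d_{n_{0},B}})$ is a \emph{finite} union of the sets $\tilde{\Lambda}_i$, finite stability of Hausdorff dimension forces some index $i_0$ with $HD(\tilde{\Lambda}_{i_0})=HD(\tilde{\Lambda}\cap\Pi(C^{n_{0}}_{d_{n_{0},B}}))$; this is precisely equation (\ref{Ajeitar1}), which the paper cites as the other hypothesis of the corollary. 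Combining the two gives $HD(\tilde{\Lambda}_{i_0})>1$. As a side remark, your thermodynamic-formalism route (Bowen's equation plus continuity of pressure when a cylinder is deleted from a mixing subshift) is a perfectly reasonable, arguably more standard, alternative to the paper's counting argument in Lemma \ref{L8S} --- but it is aimed at the lemma, not at this corollary.
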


\noindent Recalling that, as $\varphi$ is a $C^{2}$-diffeomorphism, $\Lambda$ is locally the product of stable and unstable regular Cantor set, $K^{s}\times K^{u}$. Then the previous lemma will be a consequence of the following lemma.
\ \\ 
\ \\
Let $K$ be a regular Cantor set with expanding map $\psi$ and Markov partition $\mathcal{R}=\{I_{1},\cdots,I_k\}$, so that $\ds K=\bigcap_{n\geq0}\psi^{-n}(\bigcup_{i=1}^{k}I_{i})$. Consider the transition matrix $A=(a_{ij})_{k\times k}$ associated to the partition $\mathcal{R}$, define by 

\[ a_{ij} = \left\{ \begin{array}{ll}
         1 & \mbox{if $\psi(I_{i})\supset I_{j}$};\\
				 0 & \mbox{if $\psi(I_{i})\cap I_{j}=\emptyset$} \end{array} \right.. \] 

\noindent To each admissible finite word of length $m$, $\underline{b}=(b_1,\cdots,b_m)$, such that $a_{b_{i}b_{i+1}}=1, \forall i<m$, we associate the interval $I_{\underline{b}}=I_{b_1}\cap \psi^{-1}(I_{b_{2}})\cap  \psi^{-2}(I_{b_{3}}) \cdots \cap \psi^{-(m-1)}(I_{b_{m}})$.
\begin{Le}\label {L8S}
Let $K$ be a regular Cantor set with expanding map $\psi$ and Markov partition $\mathcal{R}=\{I_{1},\cdots,I_k\}$, so that $\ds K=\bigcap_{n\geq0}\psi^{-n}(\bigcup_{i=1}^{k}I_{i})$. Given $\epsilon >0$ there is a positive integer $m_0$ such that, for every $m\ge m_0$, and for every admissible finite word of length $m$, $\underline{b}=(b_1,\cdots,b_m)$,
 $$HD({K}_{\underline{b}})\geq HD(K)-\epsilon,
\ \ \text{where}
 \ \ {K}_{\underline{b}}=\bigcap_{n\geq0}\psi^{-n}(\bigcup_{{i=1} }^{k}I_{i}\setminus I_{\underline{b}}).$$
\end{Le}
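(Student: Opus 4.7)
The plan is to reduce the dimension estimate to a pressure comparison via Bowen's formula, and to control the perturbation of the pressure caused by removing the cylinder $I_{\underline{b}}$ by a counting argument whose sharpness improves as $m\to\infty$. Concretely, $HD(K)=d$ is the unique zero of $s\mapsto P_{\Sigma_A}(-s\log|\psi'|)$ on the subshift $\Sigma_A$ coding $K$, and $HD(K_{\underline{b}})=d^{*}$ is the zero of the analogous pressure on the sub-SFT $\Sigma_A^{\underline{b}}\subset\Sigma_A$ of sequences not containing $\underline{b}$ as a subword (still an SFT after an $(m-1)$-block recoding). Since this pressure function has slope bounded below in absolute value by $\inf\log|\psi'|>0$ (uniform expansion), it is enough to prove $P_{\Sigma_A^{\underline{b}}}(-d\log|\psi'|)\geq -\delta_m$ with $\delta_m\to 0$ as $m\to\infty$.

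First I would estimate the partition functions $Z_n(d):=\sum_{|\underline{w}|=n}|I_{\underline{w}}|^{d}$ and their restricted analogues $Z_n^{*}(d)$ (summing only over $\underline{w}$ avoiding $\underline{b}$). Decomposing any admissible $\underline{w}$ of length $n$ that contains $\underline{b}$ according to the position of the first occurrence, and using bounded distortion to factor $|I_{\underline{w}}|\asymp |I_{\underline{w}_{1}}|\cdot|I_{\underline{b}}|\cdot|I_{\underline{w}_{2}}|$, one obtains
$$Z_n(d)-Z_n^{*}(d)\;\leq\;C\,n\,|I_{\underline{b}}|^{d}\;\leq\;C\,n\,\mu^{dm}$$
for some $\mu\in(0,1)$ (the maximal contraction rate), where one uses that $Z_k(d)$ is uniformly bounded in $k$, which is exactly the content of Bowen's equation $P(-d\log|\psi'|)=0$. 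Since $Z_n(d)\asymp 1$, choosing $n_m\to\infty$ with $n_m\mu^{dm}\to 0$ (for instance $n_m=\lfloor\mu^{-dm/2}\rfloor$) yields $Z_{n_m}^{*}(d)\asymp 1$, hence $(1/n_m)\log Z_{n_m}^{*}(d)\to 0$. A standard super-multiplicativity argument for $Z_n^{*}(d)$ (obtained by concatenating compatible admissible words, inserting a bounded connecting path given by irreducibility of $A$, and using bounded distortion) then forces $P_{\Sigma_A^{\underline{b}}}(-d\log|\psi'|)\geq (1/n_m)\log Z_{n_m}^{*}(d)-C/n_m\geq -\delta_m$ with $\delta_m\to 0$, and translating back through Bowen's formula gives $d^{*}\geq d-\delta_m/\inf\log|\psi'|$, which is the claim for $\epsilon=\delta_m/\inf\log|\psi'|$ arbitrarily small once $m$ is large.

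The hard part will be to handle cleanly the simultaneous limits $n\to\infty$ (definition of pressure) and $m\to\infty$ (shrinking hole): the bound $Z_n(d)-Z_n^{*}(d)\leq Cn|I_{\underline{b}}|^{d}$ grows linearly in $n$, so one cannot simply freeze $m$ and let $n\to\infty$. The key trick is precisely to extract the pressure lower bound along the subsequence $n_m$ growing like a power of $\mu^{-m}$, so that the exponential-in-$m$ decay of $|I_{\underline{b}}|^{d}$ beats the linear-in-$n$ multiplicity. An equivalent, perhaps more conceptual, viewpoint is Perron--Frobenius: in the $(m-1)$-block recoding the forbidding of $\underline{b}$ amounts to deleting a single state from the transition matrix, a rank-one modification whose effect on the spectral radius is controlled by the Perron-eigenvector entry at $\underline{b}$, itself exponentially small in $m$; this delivers the same quantitative conclusion $d-d^{*}\to 0$ as $m\to\infty$.
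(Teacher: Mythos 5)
Your route is essentially correct but genuinely different from the paper's. The paper stays entirely within the elementary Palis--Takens box-counting scheme: it introduces the exponents $\alpha_n,\beta_n$ defined by $\sum_R \Lambda_{n,R}^{-\alpha_n}=C$ and $\sum_R\lambda_{n,R}^{-\beta_n}=1$, recalls that $\beta_n-\alpha_n\to 0$ so both converge to $HD(K)$, and then compares $\beta_n$ with its analogue $\tilde\beta_n$ for $K_{\underline{b}}$ by the crude counts $\#\mathcal{R}^n\approx k^n$ versus $\#\mathcal{R}^n_{\underline{b}}\approx(k^m-1)^n$, obtaining a deficit of order $\log\bigl(k^m/(k^m-1)\bigr)/(m\log\lambda)\to 0$. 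That argument is shorter but heuristic (the counts are essentially full-shift counts, and no weighting by cylinder lengths is kept). Your pressure/Bowen-equation argument is more robust and more quantitative: the first-occurrence decomposition with bounded distortion gives the clean bound $Z_n(d)-Z_n^*(d)\leq Cn|I_{\underline{b}}|^d$, you correctly identify the competition between $n\to\infty$ and $m\to\infty$ as the crux, and the subsequence $n_m$ with $n_m|I_{\underline{b}}|^d\to 0$ resolves it; the final Lipschitz estimate on $t\mapsto P(-t\log|\psi'|)$ with slope bounded away from $0$ by $\inf\log|\psi'|$ is the right way to convert a pressure deficit into a dimension deficit. What your approach buys is an explicit rate, $d-d^*=O\bigl(\mu(I_{\underline{b}})\bigr)=O(|I_{\underline{b}}|^d)$ up to logarithms, in the spirit of escape-rate asymptotics for small holes, which the paper's count does not give.

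One step, as written, does not quite work and deserves attention: the ``standard super-multiplicativity'' of $Z_n^*(d)$. Concatenating two words that avoid $\underline{b}$ through a bounded connector can \emph{create} an occurrence of $\underline{b}$ straddling the junction, so irreducibility of $A$ alone does not give $Z^*_{n+n'+r}(d)\geq cZ^*_n(d)Z^*_{n'}(d)$; relatedly, a finite word avoiding $\underline{b}$ need not be extendable to a bi-infinite sequence avoiding $\underline{b}$, and one must also check that the survivor subshift is still transitive so that Bowen's formula applies to it. These points are all fixable for $m$ large, and your closing Perron--Frobenius remark is in fact the cleanest repair: in the $m$-block recoding, forbidding $\underline{b}$ deletes one state from a primitive weighted transition matrix of spectral radius $1$, and the drop in spectral radius is controlled by the product of the left and right Perron eigenvector entries at $\underline{b}$, which is comparable to $\mu_d(I_{\underline{b}})\asymp|I_{\underline{b}}|^d\to 0$. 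I would promote that remark from an aside to the actual proof of the pressure lower bound, and then your argument is complete.
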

\begin{proof}[\bf{Proof}]
Let ${\mathcal{R}}^{n}$ denote the set of connected components of $\psi^{-(n-1)}(I_{i})$, $I_{i}\in {\mathcal{R}}$. Let $B^n$ be the set of admissible words of length $n$, so that ${\mathcal{R}}^{n}=\{I_{\underline{b}}, \underline{b}\in B^n\}$. Fix $\tilde{i}, \tilde{j} \le k$ such that $a_{\tilde{j}\tilde{i}}=1$. Let $X^n=\{\underline{b}=(b_1,\cdots,b_n)\in B^n: b_1=\tilde{i}, b_n=\tilde{j}\}$. For any positive integer $r$, and $\underline{b_1}, \underline{b_2},\dots,\underline{b_r} \in X^n$, we have $\underline{b_1} \ \underline{b_2}\dots\underline{b_r} \in X^{nr}\subset B^{nr}$. Let ${\mathcal{\tilde{R}}}^{n}=\{I_{\underline{b}}, \underline{b}\in X^n\}$.

For $R\in {\mathcal{R}}^{n}$ take $\Lambda_{n,R}=\ds\sup \left|(\psi^{n})^{\prime}_{|_{R}}\right|$. By the mixing condition, there is $c_1>0$ such that $$\sum_{R\in\mathcal{\tilde{R}}^{n}}(\Lambda_{n,R})^{-d}\ge c_1\sum_{R\in\mathcal{R}^{n}}(\Lambda_{n,R})^{-d}, \forall d\ge 0, n\ge 1.$$
On the other hand, from \cite[pg. 69-70]{PT}, it follows that, if we define $d_n$ implicitely by
$$\sum_{R\in\mathcal{R}^{n}}(\Lambda_{n,R})^{-d_n}=1,$$
then $\lim d_n=HD(K)$, so in particular for $n$ large we have $d_n>HD(K)/2$. Notice also that there is $\lambda_1>1$ such that $\Lambda_{n,R}\ge \lambda_1^n, \forall n\ge 1$.

Let $n$ be large so that $d_n>HD(K)-\varepsilon/2$ and $\lambda_1^{n\varepsilon/2}>2/c_1$, and let $m_0=2n-1$. Given $m\ge m_0$ and an admissible finite word of length $m$, $\underline{b}=(b_1,\cdots,b_m)$, define words $\underline{c_j}=(b_j,b_{j+1},\dots,b_{j+n-1})\in B^n, 1\le j\le n$. Let $L^n=\{\underline{c_j}: 1\le j\le n\}$ and ${\mathcal{\hat{R}}}^{n}=\{I_{\underline{c}}: \underline{c}\in X^n\setminus L_n\}$. We have
\begin{eqnarray*}
\sum_{R\in\mathcal{\hat{R}}^{n}}(\Lambda_{n,R})^{-d_n}&\ge& \sum_{R\in\mathcal{\tilde{R}}^{n}}(\Lambda_{n,R})^{-d_n}-n\lambda_1^{-n d_n}\ge \sum_{R\in\mathcal{\tilde{R}}^{n}}(\Lambda_{n,R})^{-d_n}-n\lambda_1^{-n HD(K)/2}\\
&\ge& c_1\sum_{R\in\mathcal{R}^{n}}(\Lambda_{n,R})^{-d_n}-n\lambda_1^{-n HD(K)/2}=c_1-n\lambda_1^{-n HD(K)/2}>c_1/2,
\end{eqnarray*}
and so $$\sum_{R\in\mathcal{\hat{R}}^{n}}(\Lambda_{n,R})^{-(HD(K)-\varepsilon)}>\sum_{R\in\mathcal{\hat{R}}^{n}}(\Lambda_{n,R})^{-(d_n-\varepsilon/2)}>1.$$

\noindent We may define the regular Cantor set (with expanding map $\psi^n$) $$\ds \tilde{K}:=\bigcap_{r\ge 0} \psi^{-nr}(\cup_{\hat I \in \mathcal{\hat{R}}^{n}}\hat I).$$  
The previous estimate implies that $$\sum_{R\in{\mathcal{\overline{R}}}^{nr}}(\Lambda_{nr,R})^{-(HD(K)-\varepsilon)}\ge \left(\sum_{R\in\mathcal{\hat{R}}^{n}}(\Lambda_{n,R})^{-(HD(K)-\varepsilon)}\right)^r>1,$$
where $\mathcal{\overline{R}}^{nr}=\{I_{\underline{c_1} \underline{c_2}\dots\underline{c_r}}, \underline{c_j}\in X^n\setminus L_n, \forall j\le r\}$.
An thus we conclude (as before) that $HD(\tilde K)\geq HD(K)-\epsilon$. 

For any positive integer $r$ and $\underline{b_1}, \underline{b_2},\dots,\underline{b_r} \in X^n\setminus L_n$, the sequence $(a_1, a_2,\dots, a_{nr})=\underline{b_1} \underline{b_2}\dots\underline{b_r}$ satisfies that $\forall j, 1\le j\le nr-m+1, (a_j, a_{j+1}\cdots,a_{j+m-1})\ne \underline{b}$, and so $\tilde K \subset {K}_{\underline{b}}$. In particular, $HD({K}_{\underline{b}})\geq HD(\tilde K)\geq HD(K)-\epsilon$. 
\end{proof}
\vspace{0.2cm}
\begin{proof}[\bf{Proof of Lemma \ref{LS7}}]
Apply the previous Lemma to $K^{s}$ and $K^{u}$ and then use the fact that the Hausdorff dimension of the cartesian product of regular Cantor sets is the sum of the Hausdorff dimensions of the Cantor sets. 
\end{proof}
\vspace{0.2cm}
\noindent Note that by Lemma \ref{LS7} and the local structure of $\tilde{\Lambda}$, we have $HD(\tilde{\Lambda}\cap U_d)=HD(\tilde{\Lambda})>1$, where $U_{d}$ is the small neighborhood of $d$ given by Lemma \ref{L6S}.
\subsection{Proof of the Main Theorem}\label{PMT}
\noindent Given a pair $(\varphi,\Lambda)$ of a diffeomorphism $\varphi$ and a horseshoe associate to $\varphi$ with $HD(\Lambda)>1$, we defined in the section {\ref{sec1}} (cf. Lemma \ref{L5S} and Theorem \ref{TH1}) the open dense set $H_{\varphi}$ in $C^{1}(M,\mathbb{R})$  by
$$H_{\varphi}=\left\{f\in C^{1}(M,\mathbb{R}):\#M_{f}(\Lambda)=1 \ \ \text{and for} \ \ z\in M_{f}(\Lambda), \ Df_{z}(e_{z}^{s,u})\neq 0\right\}.$$

\noindent Remember that $\tilde{\Lambda}$ is a sub-horseshoe of $\Lambda$ as in Lemma \ref{LS7} with $HD(\tilde{\Lambda}\cap U_{d})=HD(\tilde{\Lambda})>1$, then by the theorem from [MY10] which we discussed above, there is a diffeomorphism $\varphi_{0}$ close to $\varphi$, a horseshoe $\Lambda_{0}$ associated to $\varphi_{0}$ and a sub-horseshoe $\tilde{\Lambda}_{0}\subset \Lambda_{0}$ with $HD(\tilde{\Lambda}_{0})>1$ and such that $\tilde{\Lambda}_{0}$ satisfies the hypotheses of Theorem \ref{TP1S}  (we use the theorem to perturb the sub-horseshoe).
\vspace{0.2cm}
\ \\
Given $f\in H_{\varphi_{0}}$, we can define $\tilde{A}_{\varphi_{0}}(f)$ a local diffeomorphism; in coordinates given by the stable and unstable foliation, we can write, $\tilde{A}_{\varphi_{0}}(f)(x,y)=(\tilde{A}^{1}_{\varphi_{0}}(f)(x),\tilde{A}^{2}_{\varphi_{0}}(f)(y))$ (see equality (\ref{E2S})) as in the section {\ref{MLDSS}}.
\vspace{0.2cm}
\ \\
Let $i_0$ be such that the Corollary \ref{C5S} holds for $(\varphi_{0},\tilde{\Lambda}_0)$ and we have (\ref{E1S}) and (\ref{Ajeitar1}).
\noindent  For $f\in H_{\varphi_0}$, Remark 
\ref{tilde A} implies that, for every $x\in \tilde{\Lambda}_{0,i_0}$, $D(f\circ \varphi^{i_{0}}_{0} \circ\tilde{A}_{\varphi_{0}}(f))_x(\tilde{e}_{x}^{s,u})\ne 0$, where $\tilde{e}_{x}^{s,u}$ are the unit vectors in stable and unstable bundle of hyperbolic set $\tilde{\Lambda}_{0}$, respectively (here $\tilde{\Lambda}_{0,i_0}$ is defined as in (\ref{Ajeitar1}) but for $\tilde{\Lambda}_{0}$ instead of $\tilde{\Lambda}$). So, the function $f\circ \varphi^{i_{0}}_{0} \circ\tilde{A}_{\varphi_{0}}(f)\in\mathcal{A}_{\tilde{\Lambda}_{0}}$. Therefore, by Corollary \ref{C7S} 

\begin{equation}\label{E4S}
int (f\circ \varphi^{i_{0}}_{0} \circ\tilde{A}_{\varphi_{0}}(f))(\tilde{\Lambda}_{0})\neq \emptyset.
\end{equation}
\noindent Then, as in Corollary \ref{C5S}, we have that
$$\sup_{n\in \mathbb{Z}}f(\varphi_{0}^{n}(\tilde{A}_{\varphi_{0}}(f)(x))=(f\circ \varphi^{i_{0}}_{0} \circ\tilde{A}_{\varphi_{0}}(f))(x)$$
for all $x\in\tilde{\Lambda}_{0,i_0}$.
This implies that $(f\circ \varphi^{i_{0}}_{0} \circ\tilde{A}_{\varphi_{0}}(f))(\tilde{\Lambda}_{0,i_0})\subset M(f,\Lambda_{0})$.
Thus, by (\ref{E4S}) we have that $int M(f,\Lambda_{0})\neq \emptyset$.\\
\ \\
Using Corollary \ref{C6S} instead of Corollary \ref{C5S}, we get the analogous result for the Lagrange spectrum. This concludes of proof of Main Theorem.

\begin{flushright}
$\square$
\end{flushright}

\section{Appendix}\label{sec RCSEMAH}

\subsection{Regular Cantor Sets and Limit Geometries}\label{RCS}
Let $\mathbb{A}$ be a finite alphabet, $\mathbb{B}$ a subset of $\mathbb{A}^{2}$, and $\Sigma_{\mathbb{B}}$ the subshift of finite type of $\mathbb{A}^{\mathbb{Z}}$ with allowed transitions $\mathbb{B}$. We will always assume that $\Sigma_{\mathbb{B}}$ is topologically mixing, and that every letter in $\A$ occurs in $\Sigma_{\mathbb{B}}$.

\noindent An {\it expansive map of type\/} $\Sigma_{\mathbb{B}}$ is a map $g$ with the following properties:
\begin{itemize}
\item[(i)] the domain of $g$ is a disjoint union
$\ds\bigcup_{\mathbb{B}}I(a,b)$. Where for each $(a,b)$,\,\, $I(a,b)$ is a compact subinterval of $I(a) := [0,1]\times\{a\}$;
\item[(ii)] for each $(a,b) \in \mathbb{B}$, the restriction of $g$ to $I(a,b)$ is a smooth diffeomorphism onto $I(b)$
satisfying $|Dg(t)| > 1$ for all $t$.
\end{itemize} 
\noindent The {\it regular Cantor set\/} associated to $g$ is the maximal invariant set
$$K = \bigcap_{n\ge0} g^{-n}\bigg(\bigcup_{\B} I(a,b)\bigg).$$

\noindent Let $\Sigma^+_{\mathbb{B}}$ be the unilateral subshift associated to $\Sigma_{\mathbb{B}}$. There exists a unique homeomorphism $h\colon \Sigma^{+}_{\B} \to K$ such that
$$
h(\underline{a}) \in I(a_0), \text{ for } \underline{a} = (a_0,a_1,\dots) \in \Sigma^+_{\mathbb{B}}
\ \ and \ \ 
h\circ\sigma =g \circ h,$$
where, $\sigma^{+}\colon \Sigma_{\B}^{+} \to \Sigma_{\B}^{+}$, is defined as follows $\sigma^{+}((a_{n})_{n\geq 0})=(a_{n+1})_{n\geq0}$.
For $(a,b)\in\B$, let $$f_{a,b}={\left[g|_{I(a,b)}\right]}^{-1}.$$
This is a contracting diffeomorphism from $I(b)$ onto $I(a,b)$. If $\underline{a}=(a_{0},\cdots,a_{n})$ is a word of $\Sigma_{\B}$, we put 
$$f_{\underline{a}}=f_{a_{0},a_{1}}\circ \cdots \circ f_{a_{n-1},a_{n}}$$
this is a diffeomorphism from $I(a_{n})$ onto a subinterval of $I(a_{0})$ that we denote by $I(\underline{a})$, with the property that if $z$ in the domain of $f_{\underline{a}}$ we have that 
$$f_{\underline{a}}(z)=h(\underline{a}h^{-1}(z)).$$
\

\noindent Let $r>1$ be a real number, or $r=+\infty$. The space of $C^r$ expansive maps of type $\Sigma$, endowed with the $C^r$ topology, will be denoted by $\Omega_\Sigma^r$\,. The union $\Omega_\Sigma = \ds\bigcup_{r>1} \Omega_\Sigma^r$ is endowed with the inductive limit topology.\\

\noindent  Let $\Sigma^- = \{(\theta_n)_{n\leq 0}\,, (\theta_i,\theta_{i+1})
\in \B \text{ for } i < 0\}$. We equip $\Sigma^-$ with the following
ultrametric distance: for $\und{\theta} \ne \und{\widetilde\theta} \in \Sigma^-$, set

\[ d(\und{\theta},\und{\widetilde\theta}) = \left\{ \begin{array}{lll}
         \ \ \ 1 & \mbox{if \ $\theta_0 \ne \widetilde{\theta}_0$};\\
           & \\
       |I(\und{\theta} \wedge \und{\widetilde\theta})|& \mbox{otherwise}\end{array} \right.. \]

\noindent where $\und{\theta} \wedge \und{\widetilde\theta} = (\theta_{-n},\dots,\theta_0)$ if
$\widetilde\theta_{-j} = \theta_{-j}$ for $0 \le j \le n$ and $\widetilde\theta_{-n-1}
\ne \theta_{-n-1}$\,.
\\
\noindent Now, let $\und{\theta} \in \Sigma^-$; for $n > 0$, let $\und{\theta}^n = (\theta_{-n},\dots,\theta_0)$, and let $B(\und{\theta}^n)$ be the affine map from
$I(\und{\theta}^n)$ onto $I(\theta_0)$ such that the diffeomorphism $k_n^{\und{\theta}}
= B(\und{\theta}^n) \circ f_{\und{\theta}^n}$ is orientation preserving.\\

\noindent We have the following well-known result (cf. \cite{Su}):
\\

\noindent{\bf Proposition}. \textit{Let} $r \in (1,+\infty)$, $g \in \Omega_\Sigma^r$.
\begin{enumerate}
\it{
	\item For any $\und{\theta} \in \Sigma^-$, there is a diffeomorphism $k^{\und{\theta}} \in \text{Diff}_+^{\ r}(I(\theta_0))$ such that $k_n^{\und{\theta}}$ converge to $k^{\und{\theta}}$ in $\text{Diff}_{+}^{\ r'}(I(\theta_0))$, for any $r'< r$, uniformly in $\und{\theta}$. The convergence is also uniform in a neighborhood of $g$ in $\Omega_\Sigma^r$\,.

\item  If $r$ is an integer, or $r = +\infty$,\,\,\,$k_n^{\und{\theta}}$ converge to $k^{\und{\theta}}$ in $\text{Diff}_+^r(I(\theta_0))$. More precisely, for every $0 \leq j \leq r-1$, there is a constant $C_j$ (independent on $\und{\theta}$) such that
$$
\left| D^j \, \log \, D \left[k_n^{\und{\theta}} \circ (k^{\und{\theta}})^{-1}\right](x)\right| \leq C_j|I(\und{\theta}^n)|.
$$
It follows that $\und{\theta} \to k^{\und{\theta}}$ is Lipschitz in the following sense: for $\theta_0 = \widetilde\theta_0$, we have
$$
\left|D^j \, \log \, D\big[k^{\und{\widetilde\theta}} \circ (k^{\und{\theta}})^{-1}\big](x)\right| \leq C_j\,d(\und{\theta}, \und{\widetilde\theta}).
$$
}
\end{enumerate}

\subsection {Expanding Maps Associated to a Horseshoe}\label{sec EMAH}
\noindent Let $\Lambda$ be a horseshoe associate a $C^{2}$-diffeomorphism $\varphi$ on the a surface $M$ and consider a finite collection $(R_{a})_{a\in\mathbb{A}}$ of disjoint rectangles of $M$, which are a Markov partition of $\Lambda$. Put the sets 
 $$W^{s}(\Lambda,R)=\bigcap_{n\geq0}\varphi^{-n}(\bigcup_{a\in \mathbb{A}}R_{a}),$$
$$W^{u}(\Lambda,R)=\bigcap_{n\leq0}\varphi^{-n}(\bigcup_{a\in \mathbb{A}}R_{a}).$$
There is a $r>1$ and a collection of $C^{r}$-submersions $(\pi_{a}:R_{a}\rightarrow I(a))_{a\in\mathbb{A}}$, satisfying the following properties:\\
\ \\
If $z,z^{\prime}\in R_{a_{0}}\cap \varphi^{-1}(R_{a_{1}})$ and $\pi_{a_{0}}(z)=\pi_{a_{0}}(z^{\prime})$, then we have $$\pi_{a_{1}}(\varphi(z))=\pi_{a_{1}}(\varphi(z^{\prime})).$$

\noindent In particular, the connected components of $W^{s}(\Lambda,R)\cap R_{a}$ are the level lines of $\pi_{a}$. Then we define a mapping $g^{u}$ of class $C^{r}$ (expansive of type $\Sigma_{\mathbb{B}}$) by the formula
$$g^{u}(\pi_{a_{0}}(z))=\pi_{a_{1}}(\varphi(z))$$
\noindent for $(a_{0},a_{1})\in \B$, $z\in R_{a_{0}}\cap\varphi^{-1}(R_{a_{1}})$.
The regular Cantor set $K^{u}$ defined by $g^{u}$, describes the geometry transverse of the stable foliation $W^{s}(\Lambda,R)$.
Moreover, there exists a unique homeomorphism $h^{u}\colon \Sigma^{+}_{\B} \to K^u$ such that
$$
h^{u}(\underline{a}) \in I(a_0), \text{ for } \underline{a} = (a_0,a_1,\dots) \in \Sigma^+_{\B}
\ \ and \ \ 
h^{u}\circ\sigma^{+} =g^{u} \circ h^{u},$$

\noindent where $\sigma^{+}\colon \Sigma_{\B}^{+} \to \Sigma_{\B}^{+}$, is defined as follows $\sigma^{+}((a_{n})_{n\geq 0})=(a_{n+1})_{n\geq0}$.\\
\ \\
Given a finite word $\und{a}=(a_{0},\cdots,a_{n})$, denote $f^{u}_{\und{a}}$ as in previous section, such that 
$$f^{u}_{\underline{a}}(z)=h^{u}(\underline{a}(h^{u})^{-1}(z)).$$
Analogously, we can describe the geometry  transverse of the unstable foliation $W^{u}(\Lambda,R)$, using a regular Cantor set $K^{s}$ define by a mapping $g^{s}$ of class $C^{r}$ (expansive of type $\Sigma_{\B}$).\\
Moreover, there exists a unique homeomorphism $h^{s}\colon \Sigma^{-}_{\B} \to K^s$ such that
$$
h^{s}(\underline{a}) \in I(a_0), \text{ for } \underline{a} = (\dots,a_1,a_0) \in \Sigma^{-}_{\mathbb{B}}
\ \ and \ \ 
h^{s}\circ\sigma^{-} =g^{s} \circ h^{s},$$
where $\sigma^{-}\colon \Sigma_{\B}^{-} \to \Sigma_{\B}^{-}$, is defined as follows $\sigma^{-}((a_{n})_{n\leq 0})=(a_{n-1})_{n\leq0}$.\\
Given a finite word $\und{a}=(a_{-n},\cdots,a_{0})$, denote $f^{s}_{\und{a}}$ as in previous section, such that 
$$f^{s}_{\underline{a}}(z)=h^{s}((h^{s})^{-1}(z)\underline{a}).$$
Also, the horseshoe $\Lambda$ is locally the product of two regular Cantor sets $K^{s}$ and $K^{u}$. So, the Hausdorff dimension of $\Lambda$, $HD(\Lambda)$ is equal to
$HD(K^{s}\times K^{u})$, but for regular Cantor sets, we have that $HD(K^{s}\times K^{u})=HD(K^{s})+HD(K^{u})$, thus $HD(\Lambda)=HD(K^{s})+HD(K^{u})$ (cf. \cite[chap 4]{PT}).
\bibliographystyle{alpha}	
\bibliography{First_Paper}

\begin{thebibliography}{Mor99}

\bibitem[CF89]{CF}
T.~W. Cusick and M.~E. Flahive.
\newblock {\em The Markoff and Lagrange Spectra}.
\newblock Math surveys and Monographs. No 30, A.M.S., providence, RI, 1989.

\bibitem[Fre75]{F}
G.~A. Freiman.
\newblock Diophantine aproximation and the geometry of numbers (markov
  problem).
\newblock {\em Kalinin. Gosudarstv. Univ. Kalink}, 1975.

\bibitem[Hal47]{Hall}
Marshall Hall.
\newblock On the sum and product of continued fractions.
\newblock {\em Annals of Math.}, 48:996--993, 1947.

\bibitem[KH95]{K}
Anatole Katok and Boris Hasselblatt.
\newblock {\em Introduction to the Modern Theory of Dynamical Systems}.
\newblock Cambridge University Press, 1995.

\bibitem[Mor99]{CCDA}
Carlos~G. Moreira.
\newblock {\em Conjuntos de Cantor, Din\^amica e Aritm\'etica}.
\newblock 22 col\'oquio Brasileiro de Matem\'aticas, 1999.

\bibitem[MY01]{MY}
Carlos~Gustavo Moreira and Jean-Christophe Yoccoz.
\newblock Some fundamental properties of plane sets of fractional dimension.
\newblock {\em Annals of Mathematics}, 154:45--96, 2001.

\bibitem[MY10]{MY1}
Carlos~Gustavo Moreira and Jean-Christophe Yoccoz.
\newblock Tangencies homoclines stables pour des ensembles hyperboliques de
  grande dimension fractale.
\newblock {\em Annales Scientifiques de L' \' ecole Normale Sup\'erieure},
  43(4):1--68, 2010.

\bibitem[PT93]{PT}
J.~Palis and F.~Takens.
\newblock {\em Hyperbolicity \& sensitive chaotic dynamiscs at homoclinic
  bifurcations}.
\newblock Cambridge studies in abvanced mathematics, 35, 1993.

\bibitem[Shu86]{Shub}
Michael Shub.
\newblock {\em Global Stability of Dinamical Systems}.
\newblock Springer-Verlag, 1986.

\bibitem[Sul87]{Su}
D~Sullivan.
\newblock Differentiable structures on fractal-like sets, determined by
  intrinsic scalins functions on dual cantor sets.
\newblock {\em The Mathematical Heritage of Herman Weyl (Durham, NC, 1987),
  Proc. Sympos. Pure Math., A.M.S., Providence, RI, 1988}, 48(4):15--23, 1987.

\end{thebibliography}

\end{document}